\documentclass[a4paper,reqno]{amsart}

\usepackage{amssymb,upref}
\usepackage[mathcal]{euscript}



\newcommand{\bydef}{:=}

\newcommand{\id}{\mathrm{id}}

\newcommand{\espan}[1]{\mathrm{span}\left\{#1\right\}}


\DeclareMathOperator{\degree}{\mathrm{degree}}



\newcommand{\cA}{\mathcal{A}}
\newcommand{\cB}{\mathcal{B}}
\newcommand{\cC}{\mathcal{C}}

\newcommand{\cL}{\mathcal{L}}

\newcommand{\frg}{{\mathfrak g}}
\newcommand{\frh}{{\mathfrak h}}

\newcommand{\fri}{{\mathfrak i}}
\newcommand{\frgsp}{\mathfrak{gsp}}
\newcommand{\frpgsp}{\mathfrak{pgsp}}



\newcommand{\ZZ}{\mathbb{Z}}

\newcommand{\FF}{\mathbb{F}}



\DeclareMathOperator{\Hom}{\mathrm{Hom}}


\DeclareMathOperator{\Aut}{\mathrm{Aut}}
\DeclareMathOperator{\AAut}{\mathbf{Aut}}
\DeclareMathOperator{\Der}{\mathrm{Der}}
\DeclareMathOperator{\Skew}{\mathrm{Skew}}
\DeclareMathOperator{\supp}{\mathrm{Supp}\,}


\newcommand{\ad}{\mathrm{ad}}
\newcommand{\Ad}{\mathrm{Ad}}

\newcommand{\frsl}{{\mathfrak{sl}}}
\newcommand{\frsp}{{\mathfrak{sp}}}

\newcommand{\frpsl}{{\mathfrak{psl}}}
\newcommand{\frgl}{{\mathfrak{gl}}}










\def\hregleta{\hrule height .5pt}
\def\hreglon{\hrule height1pt}
\def\vreglon{\vrule height 12pt width1pt depth 4pt}
\def\vregleta{\vrule width .5pt}

\def\hregletafill{\leaders\hregleta\hfill}


\newtheorem{theorem}{Theorem}

\newtheorem{lemma}[theorem]{Lemma}
\newtheorem{corollary}[theorem]{Corollary}

\theoremstyle{definition}

\newtheorem{remark}[theorem]{Remark}


\newenvironment{romanenumerate}
 {\begin{enumerate}
 
 }{\end{enumerate}}



\begin{document}

\title[Some features of Cayley algebras, and $G_2$, in low characteristics]{Some special features of Cayley algebras, and $G_2$, in low characteristics}

\author[A.~Castillo-Ramirez]{Alonso Castillo-Ramirez${}^\star$}
\address{School of Engineering and Computing Sciences, Durham University, South Road, Durham, DH1 3LE, United Kingdom}
\email{alonso.castillo-ramirez@durham.ac.uk}
\thanks{${}^\star$ Supported by the 150th Anniversary Postdoctoral Mobility Grant (PMG14-15 01) of the London Mathematical Society}

\author[A.~Elduque]{Alberto Elduque${}^{**}$}
\address{Departamento de Matem\'{a}ticas
 e Instituto Universitario de Matem\'aticas y Aplicaciones,
 Universidad de Zaragoza, 50009 Zaragoza, Spain}
\email{elduque@unizar.es}
\thanks{${}^{**}$ Supported by the Spanish Ministerio de Econom\'{\i}a y Competitividad---Fondo Europeo de Desarrollo Regional (FEDER) MTM2013-45588-C3-2-P, and by the Diputaci\'on General de Arag\'on---Fondo Social Europeo (Grupo de Investigaci\'on de \'Algebra)}

\subjclass[2010]{Primary 17A75; Secondary 17B60, 17B25}

\keywords{Octonions; Cayley algebra; Derivations; $G_2$}

\date{}

\begin{abstract}
Some features of Cayley algebras (or algebras of octonions) and their Lie algebras of derivations over fields of low characteristic are presented. More specifically, over fields of characteristic $7$, explicit embeddings of any twisted form of the Witt algebra into the simple split Lie algebra of type $G_2$ are given. Over fields of characteristic $3$, even though the Lie algebra of derivations of a Cayley algebra is not simple, it is shown that still two Cayley algebras are isomorphic if and only if their Lie algebras of derivations are isomorphic. Finally, over fields of characteristic $2$, it is shown that the Lie algebra of derivations of any Cayley algebra is always isomorphic to the projective special linear Lie algebra of degree four. The twisted forms of this latter algebra are described too.
\end{abstract}

\maketitle



\section{Introduction}

Let $\FF$ be an arbitrary field. Cayley algebras (or algebras of octonions) over $\FF$ constitute a well-known class of nonassociative algebras (see, e.g. \cite[Chapter VIII]{KMRT} and references therein). They are unital nonassociative algebras $\cC$ of dimension eight over $\FF$, endowed with a nonsingualr quadratic multiplicative form (the \emph{norm}) $q:\cC\rightarrow \FF$. Hence $q(xy)=q(x)q(y)$ for any $x,y\in\cC$, and the polar form $b_q(x,y)\bydef q(x+y)-q(x)-q(y)$ is a nondegenerate bilinear form.

Any element in a Cayley algebra $\cC$ satisfies the degree $2$ equation:
\begin{equation}\label{eq:CayleyHamilton}
x^2-b_q(x,1)x+q(x)1=0.
\end{equation}
Besides, the map $x\mapsto \bar x\bydef b_q(x,1)1-x$ is an involution (i.e., an antiautomorphism of order $2$) and the \emph{trace} $t(x)\bydef b_q(x,1)$ and norm $q(x)$ are given by $t(x)1=x+\bar x$, $q(x)1=x\bar x=\bar x x$ for any $x \in \cC$. Two Cayley algebras $\cC_1$ and $\cC_2$, with respective norms $q_1$ and $q_2$, are isomorphic if and only if the norms $q_1$ and $q_2$ are isometric.

If the characteristic of $\FF$ is not $2$, then $\cC=\FF 1\oplus \cC^0$, where $\cC^0$ is the subspace of trace zero elements (i.e., the subspace orthogonal to $\FF 1$ relative to $b_q$). For $x,y\in \cC^0$,  \eqref{eq:CayleyHamilton} shows that $xy+yx=-b_q(x,y)1$, while $t([x,y])=[x,y]+\overline{[x,y]}=[x,y]+[\bar y,\bar x]=0$, so $[x,y]\bydef xy-yx\in\cC^0$. In particular,
\begin{equation}\label{eq:xyC0}
xy=-\frac{1}{2}b_q(x,y)1+ \frac{1}{2}[x,y],
\end{equation}
so the projection of $xy$ in $\cC^0$ is $\frac{1}{2}[x,y]$. Moreover, the following relation holds (see, e.g. \cite[Theorem 4.23]{EKmon}):
\begin{equation}\label{eq:xyy}
[[x,y],y]=2b_q(x,y)y-2b_q(y,y)x,
\end{equation}
so the multiplication in $\cC$ and its norm are determined by the bracket in $\cC^0$.

The Lie algebra of derivations of a Cayley algebra $\cC$ is defined by
\[ \Der(\cC) \bydef \{ d\in \frgl(\cC): d(xy)=d(x)y+xd(y)\ \forall x,y\in\cC\}. \]
In general, if $M$ is a module for a Lie algebra $\cL$, we say that a bilinear product $\cdot : M \times M \rightarrow M$ is \emph{$\cL$-invariant} if $\cL$ acts on $(M, \cdot)$ by derivations: $x(u \cdot v) = x(u)\cdot v + u \cdot x(u)$, for any $x \in \cL$, $u,v \in M$.

If the norm $q$ of a Cayley algebra $\cC$ is isotropic (i.e., there exists $0\ne x\in \cC$ with $q(x)=0$), then $\cC$ is unique up to isomorphism. In this case, the Cayley algebra $\cC$ is said to be \emph{split}, and it has a \emph{good basis} $\{ p_1, p_2,u_1,u_2,u_3,v_1,v_2,v_3\}$ with multiplication given in Table \ref{ta:good_basis}. We denote by $\cC_s$ the split Cayley algebra.

\begin{table}[!h]\label{ta:good_basis}
\[
\vcenter{\offinterlineskip
\halign{\hfil$#$\enspace\hfil&#\vreglon
 &\hfil\enspace$#$\enspace\hfil
 &\hfil\enspace$#$\enspace\hfil&#\vregleta
 &\hfil\enspace$#$\enspace\hfil
 &\hfil\enspace$#$\enspace\hfil
 &\hfil\enspace$#$\enspace\hfil&#\vregleta
 &\hfil\enspace$#$\enspace\hfil
 &\hfil\enspace$#$\enspace\hfil
 &\hfil\enspace$#$\enspace\hfil&#\vreglon\cr
 &\omit\hfil\vrule width 1pt depth 4pt height 10pt
   &p_1&p_2&\omit&u_1&u_2&u_3&\omit&v_1&v_2&v_3&\cr
 \noalign{\hreglon}
 p_1&& p_1&0&&u_1&u_2&u_3&&0&0&0&\cr
 p_2&&0&p_2&&0&0&0&&v_1&v_2&v_3&\cr
 &\multispan{12}{\hregletafill}\cr
 u_1&&0&u_1&&0&v_3&-v_2&&-p_1&0&0&\cr
 u_2&&0&u_2&&-v_3&0&v_1&&0&-p_1&0&\cr
 u_3&&0&u_3&&v_2&-v_1&0&&0&0&-p_1&\cr
 &\multispan{12}{\hregletafill}\cr
 v_1&&v_1&0&&-p_2&0&0&&0&u_3&-u_2&\cr
 v_2&&v_2&0&&0&-p_2&0&&-u_3&0&u_1&\cr
 v_3&&v_3&0&&0&0&-p_2&&u_2&-u_1&0&\cr
 \noalign{\hreglon}}}
\]
\caption{{\vrule width 0pt height 15pt}Multiplication table in a good basis of the split Cayley algebra.}
\end{table}

Given a finite-dimensional simple Lie algebra $\frg$ of type $X_r$ over the complex numbers, and a Chevalley basis $\cB$, let $\frg_\ZZ$ be the $\ZZ$-span of $\cB$ (a Lie algebra over $\ZZ$). The Lie algebra $\frg_\FF\bydef\frg_\ZZ\otimes_\ZZ \FF$ is the \emph{Chevalley algebra} of type $X_r$. In particular, the Chevalley algebra of type $G_2$ is isomorphic to $\Der(\cC_s)$ (see, e.g. \cite[\S 4.4]{EKmon}). For any Cayley algebra $\cC$, the Lie algebra $\Der(\cC)$ is a twisted form of the Chevalley algebra $\Der(\cC_s)$. (Recall that, if $\cA$ and $\cB$ are algebras over $\FF$, then $\cA$ is a \emph{twisted form} of $\cB$ whenever $\cA\otimes_\FF \FF_{\text{alg}}\cong \cB\otimes_\FF\FF_{\text{alg}}$, for an algebraic closure $\FF_{\text{alg}}$ of $\FF$.)

If the characteristic of $\FF$ is neither $2$ nor $3$, then the Chevalley algebra of type $G_2$ is simple; this is the split simple Lie algebra of type $G_2$. Moreover, two Cayley algebras $\cC_1$ and $\cC_2$ are isomorphic if and only if their Lie algebras of derivations are isomorphic (see \cite[Theorem IV.4.1]{Seligman} or \cite[Theorem 4.35]{EKmon}).

The goal of this paper is to show some surprising features of Cayley algebras over fields of characteristic $7$, $3$ and $2$. 

Section \ref{se:char7} studies the case of characteristic $7$ and is divided in three subsections. In Section \ref{subsec1}, we review a construction of $\cC_s$ due to Dixmier \cite{Dixmier} in terms of transvectants which was originally done in characteristic $0$, but it is valid in any characteristic $p \geq 7$. In this construction, $\cC_s$ appears as the direct sum of the trivial one-dimensional module and the restricted irreducible seven-dimensional module $V_6$ for the simple Lie algebra $\frsl_2(\FF)$, which embeds into $\Der(\cC_s)$ as its principal $\frsl_2$ subalgebra. When the characteristic is $7$, this action of $\frsl_2(\FF)$ by derivations on $\cC_s$ may be naturally extended to an action by derivations of the Witt algebra $W_1\bydef \Der\left(\FF[X]/(X^7)\right)$, explaining the fact, first proved in \cite[Lemma 13]{Premet} (see also \cite{Herpel_Stewart}), that $W_1$ embeds into the split simple Lie algebra of type $G_2$. In Section \ref{subsec2}, we show that, when $\FF$ is algebraically closed, $V_6$ is the unique non-trivial non-adjoint restricted irreducible module for $W_1$ with a nonzero invariant product. Then, in Section \ref{subsec3} we prove that, in characteristic $7$ and even when the ground field is not algebraically closed, all the twisted forms of the Witt algebra embed into $\Der(\cC_s)$, and any two embeddings of the same twisted form are conjugate by an automorphism.
 
Section \ref{se:char3} is devoted to the case of characteristic $3$. In this situation, it is known that the Chevalley algebra of type $G_2$ is not simple, but it contains an ideal isomorphic to the projective special linear Lie algebra $\frpsl_3(\FF)$. We review this situation and prove that it is still valid that two Cayley algebras are isomorphic if and only if their Lie algebras of derivations are isomorphic.

Finally, in Section \ref{se:char2}, we prove that the Lie algebra of derivations $\Der(\cC)$ of any Cayley algebra $\cC$ over a field $\FF$ of characteristic $2$ is always isomorphic to the projective special linear Lie algebra $\frpsl_4(\FF)$. A proof of this fact when $\cC = \cC_s$ appears in \cite[Corollary 4.32]{EKmon}. Hence, in this case, it is plainly false that two Cayley algebras are isomorphic if and only if their Lie algebras of derivations are isomorphic. We show that the isomorphism classes of twisted forms of $\frpsl_4(\FF)$, which is here the Chevalley algebra of type $G_2$, are in bijection with the isomorphism classes of central simple associative algebras of degree $6$ endowed with a symplectic involution.

\medskip


\section{Characteristic $7$}\label{se:char7}


\subsection{Dixmier's construction} \label{subsec1}

Let $\FF[x,y]$ be the polynomial algebra in two variables over a field $\FF$ of characteristic $0$. The general linear Lie algebra $\frgl_2(\FF)$ acts by derivations on $\FF[x,y]$ preserving the degree of each polynomial. For any $n\geq 0$, denote by $V_n$ the subspace of homogeneous polynomials of degree $n$ in $\FF[x,y]$. For any $i,j,q\geq 0$ with $q\leq i,j$, consider the $q$-\emph{transvectant} $V_i\times V_j\rightarrow V_{i+j-2q}$ given by
\begin{multline*}
(f,g)_q\bydef \frac{(i-q)!}{i!}\frac{(j-q)!}{j!}
\left(\frac{\partial^qf}{\partial x^q}\frac{\partial^qg}{\partial y^q}
-\binom{q}{1}
\frac{\partial^qf}{\partial x^{q-1}\partial y}\frac{\partial^qg}{\partial x\partial y^{q-1}}\right.\\
\left. +\binom{q}{2}
\frac{\partial^qf}{\partial x^{q-2}\partial y^2}\frac{\partial^qg}{\partial x^2\partial y^{q-2}}
-+\cdots\right),
\end{multline*}
for $f\in V_i$ and $g\in V_j$.

It turns out that the split Cayley algebra $\cC_s$ is isomorphic to the algebra defined on $\FF 1\oplus V_6$ with multiplication given by
\begin{equation}\label{eq:CsDixmier}
(\alpha 1+f)(\beta 1+g)\bydef \bigl(\alpha\beta-\frac{1}{20}(f,g)_6\bigr)1+
\bigl(\alpha g+\beta f+(f,g)_3\bigr),
\end{equation}
for any $\alpha,\beta\in\FF$ and $f,g\in V_6$ (see \cite[3.6 Proposition]{Dixmier}); equipped with this product, the subspace $V_6$ becomes the subspace of trace zero elements in $\cC_s$. The existence of this isomorphism is based on the following identity given in \cite[3.5 Lemme]{Dixmier}:
\begin{equation}\label{eq:fgg}
((f,g)_3,g)_3=\frac{1}{20}\Bigl((f,g)_6g-(g,g)_6f\Bigr),
\end{equation}
for any $f,g\in V_6$.

Consider the following endomorphisms of $V_6$:
\[ e_{-1}:=\left.x\frac{\partial\ }{\partial y}\right|_{V_6},\quad
e_0:=-\frac{1}{2}\left.\left(x\frac{\partial\ }{\partial x}-y\frac{\partial\ }{\partial y}\right)\right|_{V_6},\quad e_1:=-\left.y\frac{\partial\ }{\partial x}\right|_{V_6}.\]
A direct calculation gives $[e_i,e_j]=(j-i)e_{i+j}$, for $i,j\in\{-1,0,1\}$, so these endomorphisms span a subalgebra of $\frgl(V_6)$ isomorphic to $\frsl_2(\FF)$ that acts by derivations on $V_6$. This construction also works when the characteristic of $\FF$ is $p\geq 7$, and, moreover, the map  $f\otimes g\mapsto (f,g)_3$ gives the only, up to scalars, linear map $V_6\otimes V_6\rightarrow V_6$ invariant under the action of $\frsl_2(\FF)$.

Now assume that the characteristic of $\FF$ is $7$. First, we will find a simpler formula describing the $3$-transvectant $(\; , \;)_3 : V_6 \times V_6 \rightarrow V_6$. For $0\leq i\leq 6$, denote
\[ m_i\bydef x^{6-i}y^i \in V_6.\]
Taking the indices modulo $7$, the action of $\frsl_2(\FF)$ on $V_6$ is given by the following formulas:
\begin{equation}\label{eq:e01-1mi}
\begin{split}
e_{-1}(m_i)&=im_{i-1} \text{ for } 1 \leq i \leq 6, \text{ and } e_{-1}(m_0) = 0, \\
e_0(m_i)&=(i+4)m_i \text{ for any } 0 \leq i \leq 6, \\
e_1(m_i)&=(i+1)m_{i+1} \text{ for } 0 \leq i \leq 5, \text{ and } e_1(m_6) = 0.
\end{split}
\end{equation}

For $i,j$ in the prime subfield $\FF_7$ of $\FF$, define the element $c(i,j)$ by
\begin{equation}\label{eq:cij}
c(i,j)=2(j-i)(4i+j-1)(4j+i-1)\ \bigl(\in\FF_7\subseteq \FF\bigr).
\end{equation}
It is clear that $c(i,j)=-c(j,i)$ for any $i,j \in \FF_7$, and $c(0,6)=1$. A straightforward computation gives, for any $i,j,k\in\FF_7$:
\begin{equation}\label{eq:cij_recurs}
(i+j+4k+1)c(i,j)=(i+4k+4)c(i+k,j)+(j+4k+4)c(i,j+k).
\end{equation}
Therefore, defining 
\begin{equation}\label{eq:mimj}
m_i\cdot m_j :=c(i,j)m_{i+j-3}
\end{equation}
(with indices modulo $7$), we have
\[
e_k(m_i\cdot m_j)=e_k(m_i)\cdot m_j+m_i\cdot e_k(m_j),
\]
for any $k \in \{-1,0,1\}$, $0\leq i,j\leq 6$. This means that the product in \eqref{eq:mimj} is $\frsl_2(\FF)$-invariant; hence, by uniqueness and since $(m_0,m_6)_3=m_3=m_0\cdot m_6$, we conclude that
\begin{equation}\label{eq:mimj_3}
m_i\cdot m_j=(m_i,m_j)_3
\end{equation}
for any $0 \leq i,j \leq 6$. The multiplication table of $V_6$ with this product is given in Table \ref{ta:multiplication_mis}.

\begin{table}[!h]
\setlength{\tabcolsep}{6pt}
\renewcommand{\arraystretch}{1.3}
\centering
\begin{tabular}{c|ccccccc|}
$\cdot$ & $m_0$ & $m_1$ & $m_2$ & $m_3$ & $m_4$ & $m_5$ & $m_6$ \\ \hline 
$m_0$ & $0$ & $0$ & $0$ & $-m_0$ & $3m_1$ & $-3m_2$ & $m_3$   \\
$m_1$ & $0$ & $0$ & $3m_0$ & $m_1$ & $0$ & $-m_3$ & $-3m_4$ \\
$m_2$ & $0$ & $-3m_0$ & $0$ & $m_2$ & $-m_3$ & $0$ & $3m_5$\\
$m_3$ & $m_0$ & $-m_1$ & $-m_2$ & $0$ & $m_4$ & $m_5$ & $-m_6$ \\
$m_4$ & $-3m_1$ & $0$ & $m_3$ & $-m_4$ & $0$ & $3m_6$ & $0$ \\
$m_5$ & $3m_2$ & $m_3$ & $0$ & $-m_5$ & $-3m_6$ & $0$ & $0$\\
$m_6$ & $-m_3$ & $3m_4$ & $-3m_5$ & $m_6$ & $0$ & $0$ & $0$ \\ \hline
\end{tabular}
\caption{{\vrule width 0pt height 15pt}Multiplication table of $(V_6,\cdot)$.}
\label{ta:multiplication_mis}
\end{table} 

\begin{remark}\label{re:explicit_iso}
The above arguments show that the split Cayley algebra $\cC_s$ is isomorphic to the algebra defined on $\FF 1\oplus V_6$ with multiplication given by \eqref{eq:CsDixmier}, or equivalently, by
\[
(\alpha 1+f)(\beta 1+g)\bydef \bigl(\alpha\beta+(f,g)_6\bigr)1+
\bigl(\alpha g+\beta f+f\cdot g\bigr).
\]
An explicit isomorphism between $\FF 1\oplus V_6$ and $\cC_s$, in terms of a good basis of  $\cC_s$ as in Table \ref{ta:good_basis}, is given by:
\[
\left( \begin{matrix}
\ 1 \ & \ m_0\ &\ m_1\ &\ m_2\ &\ m_3\ &\ m_4\ &\ m_5\ &\ m_6\ \\
\downarrow&\downarrow&\downarrow&\downarrow&\downarrow&\downarrow&\downarrow&\downarrow\\
p_1 + p_2 & -3v_3&3u_2&3u_1&-p_1+p_2&-3v_1&-3v_2&3u_3
\end{matrix} \right).
\]
\end{remark}

Now, for any $k \in \{ 2,\ldots,5 \}$, define the endomorphism $e_k$ of $V_6$ by
\begin{equation}\label{eq:eks}
e_k(m_i)=\begin{cases} (i+4k+4)m_{i+k}&\text{if $i+k \leq 6$,}\\
   0&\text{otherwise.}
   \end{cases}
\end{equation}
We will show that $e_k$ is a derivation of $(V_6,\cdot)$ for any $k=\{-1,0,\ldots,5\}$. Take $0\leq i,j \leq 6$. If $i+j+k-3\leq 6$, $i+k\leq 6$, and $j+k\leq 6$, then
\begin{equation}\label{eq:ekmimj}
\begin{split}
e_k (m_i \cdot m_j) &= (i+j + 4k + 1) c(i,j)  m_{i+j+k-3} \\
 &= \Bigl((i+4k+4)c(i+k,j) + (j+4k+4)c(i,j+k)\Bigr) m_{i+j+k-3} \\
 &= e_k(m_i) \cdot m_j + m_i \cdot e_k(m_j).
\end{split}
\end{equation}
If $i+j+k-3>6$, then $e_k(m_i\cdot m_j)=0=e_k(m_i)\cdot m_j=m_i\cdot e_k(m_j)$. Finally, if $i+j+k-3\leq 6$ and $i+k>6$ (the same happens if $j+k>6$), then $e_k(m_i)=0$ and $c(i+k,j)$ is one of $c(0,0)$, $c(0,1)$, $c(0,2)$ or $c(1,1)$, but all these are equal to $0$, so \eqref{eq:ekmimj} applies.

Because of \eqref{eq:CsDixmier} and \eqref{eq:fgg}, we may extend the action of $e_k$ to $ \FF 1\oplus V_6 \cong \cC_s$ by means of $e_k(1)=0$, obtaining that $e_k$ is a derivation of the split Cayley algebra $\cC_s$. Furthermore, one checks at once that
\begin{equation}\label{eq:Witt_basis}
[e_i,e_j]=(j-i)e_{i+j}
\end{equation}
for any $i,j \in \{ -1,0,\ldots,5 \}$, with $e_i=0$ when $i$ is outside $\{ -1,0,\ldots,5 \}$. Thus, the span of $\{ e_i:-1\leq i\leq 5 \}$ in $\Der(V_6,\cdot)$ is isomorphic to the Witt algebra $W_1:=\Der\left(\FF[X]/(X^7)\right)$; the map $e_i\leftrightarrow x^{i+1}\frac{\partial }{\partial x}$ gives an explicit isomorphism of these Lie algebras, where $x$ denotes the class of $X$ modulo $(X^7)$.

Since $\Der(\cC_s)$ is the split simple Lie algebra of type $G_2$, the above arguments provide an elementary proof of the next result.

\begin{theorem}[\cite{Premet}]\label{th:W1_char7}
If the characteristic of the ground field $\FF$ is $7$, then the Witt algebra $W_1$ embeds as a subalgebra of the simple split Lie algebra of type $G_2$.
\end{theorem}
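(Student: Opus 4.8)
The plan is to produce the embedding completely explicitly, building on Dixmier's realization of $\cC_s$ recalled above. Since $\Der(\cC_s)$ is the split simple Lie algebra of type $G_2$, it suffices to exhibit seven derivations of $\cC_s$ spanning a subalgebra isomorphic to $W_1$. Working in the model $\cC_s\cong\FF 1\oplus V_6$ with multiplication \eqref{eq:CsDixmier}, and using that any derivation kills $1$ and hence preserves the trace-zero subspace $V_6$, so that it is determined by its restriction to $V_6$, the task reduces to finding derivations of the algebra $(V_6,\cdot)$, where $\cdot$ is the $3$-transvectant.

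First I would replace the cumbersome transvectant formula by explicit structure constants: in the basis $m_i=x^{6-i}y^i$ one has $m_i\cdot m_j=c(i,j)m_{i+j-3}$ with $c(i,j)$ as in \eqref{eq:cij} (indices read modulo $7$). This follows from the $\frsl_2(\FF)$-invariance of the map $(m_i,m_j)\mapsto c(i,j)m_{i+j-3}$, the one-dimensionality of the space of invariant products $V_6\otimes V_6\to V_6$, and the normalization $(m_0,m_6)_3=m_3$. The key algebraic input here is the polynomial identity \eqref{eq:cij_recurs} for $c(i,j)$, which encodes invariance under $e_{-1},e_0,e_1$ and, once established, drives everything that follows.

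Next I would define, for $k\in\{2,\ldots,5\}$, the endomorphism $e_k$ of $V_6$ by \eqref{eq:eks}, so that together with $e_{-1},e_0,e_1$ one has seven candidate operators. The heart of the proof is to check that each $e_k$ is a derivation of $(V_6,\cdot)$. Expanding $e_k(m_i\cdot m_j)$ and $e_k(m_i)\cdot m_j+m_i\cdot e_k(m_j)$ via the structure constants, the generic case $i+j+k-3\le 6$, $i+k\le 6$, $j+k\le 6$ is exactly \eqref{eq:cij_recurs}; the case $i+j+k-3>6$ is $0=0$; and the remaining boundary case, say $i+k>6$, is handled by observing that the relevant coefficient $c(i+k,j)$ is one of $c(0,0),c(0,1),c(0,2),c(1,1)$, all of which vanish. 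I expect this case analysis — in particular getting all the modular index bookkeeping right — to be the only real obstacle; there is nothing deep, but it must be done carefully.

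Finally, using \eqref{eq:CsDixmier} together with Dixmier's identity \eqref{eq:fgg}, each $e_k$ extends to a derivation of $\cC_s=\FF 1\oplus V_6$ by $e_k(1)=0$. A direct computation on the basis $\{m_i\}$ then verifies $[e_i,e_j]=(j-i)e_{i+j}$ for all $i,j\in\{-1,0,\ldots,5\}$ (with $e_i=0$ outside that range), so the span of $\{e_{-1},\ldots,e_5\}$ is a subalgebra of $\Der(\cC_s)$ isomorphic to $W_1=\Der\!\left(\FF[X]/(X^7)\right)$ via $e_i\leftrightarrow x^{i+1}\frac{\partial}{\partial x}$. Since $\Der(\cC_s)$ is the split simple Lie algebra of type $G_2$, this proves the theorem.
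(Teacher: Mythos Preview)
Your proposal is correct and follows essentially the same route as the paper: both use Dixmier's model $\cC_s\cong\FF 1\oplus V_6$, compute the structure constants $c(i,j)$ via \eqref{eq:cij_recurs}, define $e_2,\ldots,e_5$ by \eqref{eq:eks}, verify the derivation property by the same three-case analysis (including the observation that the boundary coefficients $c(0,0),c(0,1),c(0,2),c(1,1)$ vanish), extend by $e_k(1)=0$ using \eqref{eq:CsDixmier} and \eqref{eq:fgg}, and check the Witt relations \eqref{eq:Witt_basis}.
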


\begin{remark}
A specific embedding of $W_1$ into $\Der(\cC_s)$ may be given in terms of a good basis of $\cC_s$. Given $x,y\in\cC_s$, the derivation of $\cC_s$ defined by $D_{x,y}:=\ad_{[x,y]}+[\ad_x,\ad_y]$ (where $\ad_x(y):=[x,y]$) is called the \emph{inner derivation induced by $x$ and $y$}. Then, the assignment
\begin{align*}
e_{-1} &\mapsto D_{p_1 - p_2,u_1} + D_{u_2, v_1},& e_3 &\mapsto -D_{v_1, v_2} = 4D_{p_1 - p_2,u_3}, \\
e_0 &\mapsto 2D_{u_3, v_3} + 3D_{u_2,v_2},  &\quad e_4 &\mapsto 3D_{v_1, u_3}, \\
e_1 &\mapsto D_{p_1 - p_2,v_1} + D_{u_1,v_2}, & e_5 &\mapsto 5D_{v_2,u_3},  \\
e_2 &\mapsto 3D_{u_1, u_3} = 2D_{p_1 - p_2,v_2}, &&
\end{align*} 
gives an explicit embedding of $W_1$ into $\Der(\cC_s)$.
\end{remark}

\smallskip


\subsection{Invariant bilinear products on modules for $W_1$} \label{subsec2}

It turns out that $V_6$ is a very special module for the Witt algebra in characteristic $7$ (see Theorem \ref{th:main} below).

Assume for this section that $\FF$ is an algebraically closed field of characteristic $p \geq 5$. Let $\cL$ be a finite-dimensional restricted Lie algebra over $\FF$ with $p$-mapping denoted by $[p]$. Given any character $\chi \in \cL^*$, there is a finite-dimensional algebra $u(\cL,\chi)$, called the \emph{reduced enveloping algebra of $\cL$ associated with $\chi$}, that is a quotient of the universal enveloping algebra of $\cL$ and whose irreducible modules coincide precisely with the irreducible modules for $\cL$ with character $\chi$. We say that $V$ is a \emph{restricted module} for $\cL$ if there is a representation $\rho : \cL \rightarrow \frgl(V)$ that is a morphism of restricted Lie algebras, i.e., $\rho(x^{[p]}) = \rho(x)^{p}$, for any $x \in \cL$. When $\chi=0$, it turns out that the irreducible modules for $u(\cL, \chi)$ coincide precisely with the restricted irreducible modules for $\cL$.

Let $W=W_1$ be the Witt algebra over $\FF$ with basis $\{e_i:-1\leq i\leq p-2\}$ and Lie bracket as in \eqref{eq:Witt_basis}. It is well known that $W$ has a $p$-mapping given by $e_i^{[p]} := \delta_{i,0}e_i$.

For each $i \in \{-1,0,...,p-2\}$, define $W_{(i)} = \langle e_i, ..., e_{p-2} \rangle$. Consider the $p$-dimensional Verma modules $L(\lambda) = u(W,0) \otimes_{u(W_{(0)},0)} k_\lambda$, where $\lambda \in \{0,1,...,p-1\}$, and $k_\lambda$ is the one-dimensional module for $W_{(0)}$ on which $W_{(1)}$ acts trivially and $e_0$ acts by multiplication by $\lambda$. By \cite[Lemma 2.2.1]{N92}), $L(\lambda)$ has a basis $\left\{ m_{0},m_{1},...,m_{p-1}\right\}$ on which the action of $W$ is given by
\[ e_{k}(m_{j}) = ( j + ( \lambda +1 ) ( k+1 )) m_{k+j}, \]
where $m_{j}=0$ for $j$ outside $\{ 0,...,p-1\}$.

It was established in \cite{C41} (see also \cite{S77,FN98}) that any restricted irreducible module for $W$ is isomorphic to one of the following: 
\begin{enumerate}
\item The trivial one-dimensional module.
\item The ($p-1$)-dimensional quotient $L(p-1)/\langle m_0 \rangle$.
\item The $p$-dimensional Verma module $L(\lambda)$, with $\lambda \in \{1,...,p-2 \}$.
\end{enumerate}

In the next result the invariant bilinear products $L\times L \rightarrow L$ (not to be confused with invariant bilinear forms!) on irreducible restricted modules for the Witt algebra are determined.

\begin{theorem}\label{th:main}
Let $W=W_1$ be the Witt algebra over an algebraically closed field $\FF$ of characteristic $p \geq 5$. Then:
\begin{enumerate}
\item If $p \neq 7$, there is no non-trivial non-adjoint restricted irreducible module for $W$ with a nonzero invariant bilinear product.
\item If $p = 7$, there is a unique non-trivial non-adjoint restricted irreducible module for $W$ with a nonzero invariant bilinear product. Up to isomorphism and scaling of the product, this unique module is $V_6$ with product given in \eqref{eq:mimj}. ($V_6$ is a module for $W$ by means of \eqref{eq:e01-1mi} and \eqref{eq:eks}.)   
\end{enumerate}
\end{theorem}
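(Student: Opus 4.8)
The plan is to analyze invariant bilinear products module by module, using the explicit weight-basis description of the restricted irreducibles recorded above. Fix one of the three families of restricted irreducibles $L$ and suppose $\cdot\colon L\times L\to L$ is a nonzero $W$-invariant product. Since $e_0$ acts semisimply on each $L$ with known eigenvalues (the weights), invariance under $e_0$ forces $\cdot$ to be ``weight-additive'': writing $L=\bigoplus_{j} L_j$ for the $e_0$-eigenspace decomposition, we must have $L_a\cdot L_b\subseteq L_{a+b}$ whenever this is an eigenspace of $L$, and $L_a\cdot L_b=0$ otherwise. In each of the three cases the weights are known explicitly, so this pins the product down to a small number of structure constants $c(i,j)$ defined by $m_i\cdot m_j=c(i,j)m_{i+j+\mathrm{shift}}$ for the appropriate shift dictated by the weights. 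One then imposes invariance under $e_1$ (equivalently, under $e_{-1}$, since $e_{-1}$ and $e_1$ generate $W$ together with $e_0$) to obtain a recursion on the $c(i,j)$; solving this recursion, subject to the ``boundary'' relations coming from $m_j=0$ for $j$ outside the allowed range, is what determines whether a nonzero product exists.

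For the trivial one-dimensional module the statement is vacuous (it is the ``trivial'' module excluded in the statement). For the adjoint module $L=W$ itself the bracket gives an invariant product, which is again excluded. So the work is in the remaining two families: the $(p-1)$-dimensional module $\ol{L}:=L(p-1)/\langle m_0\rangle$, and the $p$-dimensional Verma modules $L(\lambda)$ for $1\le \lambda\le p-2$ with $\lambda\ne 1$ (the case $\lambda=1$ is the adjoint module). For $L(\lambda)$: the weights are $j+(\lambda+1)\cdot 1$ read in a suitable normalization, so $L_a\cdot L_b$ lands in weight $a+b$; comparing with the weights actually occurring in $L(\lambda)$ shows that $m_i\cdot m_j$ must be a multiple of $m_{i+j-\lambda-1}$ (indices mod $p$, using that $L(\lambda)$ is $p$-dimensional with weights forming a full coset of $\FF_p$). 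Writing $m_i\cdot m_j=c(i,j)m_{i+j-\lambda-1}$ and imposing $e_k$-invariance as in \eqref{eq:ekmimj}, one gets precisely a recursion of the shape \eqref{eq:cij_recurs} with $\lambda$ as a parameter. The point of the theorem is that this recursion, together with antisymmetry/symmetry constraints and the requirement that the formula be consistent over all of $\FF_p\times\FF_p$, has a nonzero solution if and only if $p=7$ and $\lambda=3$, in which case $L(3)\cong V_6$ and the solution is \eqref{eq:cij} up to scalar. For the $(p-1)$-dimensional module a similar but slightly more delicate analysis applies: here there is an extra boundary condition because $m_0$ has been killed, and one checks the candidate recursion is incompatible with it for every $p\ge 5$.

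The main obstacle, and the heart of the proof, is the explicit solution of the family of recursions: showing that for $L(\lambda)$ the functional equation in $i,j,k$ over $\FF_p$ (the analogue of \eqref{eq:cij_recurs}) admits a nonzero solution only for the single pair $(p,\lambda)=(7,3)$. The natural approach is to set $k=1$ to get a three-term recursion expressing $c(i+1,j)$ and $c(i,j+1)$ in terms of $c(i,j)$, use it to propagate a single normalization $c(0,\mathrm{top})=1$ across the whole grid, and then demand that the result be single-valued — i.e. that the two ways of reaching a given $(i,j)$ agree, and that cycling all the way around (using indices mod $p$) returns the same value. These consistency conditions become polynomial congruences in $p$ and $\lambda$; one shows they fail except in the known case. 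A clean way to organize this is to observe that whenever a nonzero invariant product exists, $\FF 1\oplus L$ (or a suitable central/adjoint extension) acquires the structure of an algebra on which $W$ acts by derivations, and to leverage constraints coming from the Cayley-algebra identities \eqref{eq:xyy}, \eqref{eq:fgg}; pushing this far enough forces $L$ to be $7$-dimensional and isomorphic to $V_6$. Once the module is identified as $V_6$, uniqueness of the product up to scaling is exactly the uniqueness (up to scalars) of the $\frsl_2(\FF)$-invariant map $V_6\otimes V_6\to V_6$ quoted in Section \ref{subsec1}, since any $W$-invariant product is in particular $\frsl_2(\FF)$-invariant.
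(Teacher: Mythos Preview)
Your outline shares the paper's skeleton --- use the $e_0$-weight decomposition to write $m_i\cdot m_j=c(i,j)m_{i+j+\text{shift}}$, then impose invariance under the remaining $e_k$ to constrain the $c(i,j)$ --- but several details are off, and the decisive computation is missing.

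First, two concrete errors. The adjoint module is $L(p-2)$, not $L(1)$: in $L(\lambda)$ the weight of $m_j$ is $j+\lambda+1$, while $\ad_{e_0}(e_j)=je_j$, so matching forces $\lambda\equiv -2$. Second, the same weight bookkeeping gives the shift $m_i\cdot m_j\in\FF m_{i+j+\lambda+1}$ (indices in $\{0,\dots,p-1\}$ via the bijection with $\FF_p$), not $m_{i+j-\lambda-1}$; for $(p,\lambda)=(7,3)$ this is $i+j+4\equiv i+j-3$, consistent with \eqref{eq:mimj}.

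More importantly, you never actually solve the recursion, and the alternative you sketch --- endowing $\FF1\oplus L$ with an algebra structure and invoking the Cayley identities \eqref{eq:xyy}, \eqref{eq:fgg} --- is circular: those identities are properties of the octonion product, not of an arbitrary $W$-invariant bilinear map, so there is no reason they should hold a priori. The paper's proof is much more direct. For the $(p-1)$-dimensional module it observes that $e_0(\bar m_1\cdot\bar m_{p-1})=0$, while $0$ is exactly the weight that has been killed in the quotient, so the product vanishes immediately --- no ``delicate boundary analysis'' is needed. For $L(\lambda)$ with $1\le\lambda\le p-3$, the paper first notes that $m_0\otimes m_{p-1}$ generates $L(\lambda)\otimes L(\lambda)$, so one may normalize $m_0\cdot m_{p-1}=m_\lambda$; then it computes $m_k\cdot m_{p-1}$ in two ways, once by applying $e_k$ to $m_0\cdot m_{p-1}$ and once by applying $e_1$ iteratively $k$ times. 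Comparing for $k=2$ yields $\lambda(11\lambda+9)=0$, and for $k=3$ yields $17\lambda^2+38\lambda+20=0$; together these force $p\mid 35$, and the case $p=5$ is eliminated by a short direct check, leaving $(p,\lambda)=(7,3)$. This concrete derivation of polynomial constraints is the heart of the argument, and your proposal does not supply it.
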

\begin{proof}
We will use the above classification of restricted irreducible modules for $W$ and follow several steps:
\begin{romanenumerate}
\item 
Let $\lambda \in \{0,1,...,p-1\}$ and suppose that $L(\lambda)$ is equipped with $\cdot$, a $W$-invariant bilinear product. We claim that $m_{0}\cdot m_{p-1}= \mu m_{\lambda }$, for some scalar $\mu \in \FF$ that determines the invariant product. Indeed, first observe that 
\[\begin{split}
e_{0}\left( m_{0}\cdot m_{p-1}\right)  &=\left( \lambda +1\right) m_{0}\cdot m_{p-1}+\left( p-1+\left( \lambda +1\right) \right) m_{0}\cdot m_{p-1} \\
&=\left( 2\lambda +1\right) m_{0}\cdot m_{p-1}.
\end{split}\]
Therefore, $m_{0}\cdot m_{p-1}$ is an eigenvector of the action of $e_{0}$ with eigenvalue $2\lambda +1$. As $m_{\lambda }$ is also an eigenvector of the action of $e_{0}$ with eigenvalue $2 \lambda +1$, and the action of $e_{0}$ is diagonal with $p$ distinct eigenvalues, we must have that $m_{0}\cdot m_{p-1}$ is a scalar multiple of $m_{\lambda}$. Furthermore, $m_0\otimes m_{p-1}$ generates the $W$-module $L(\lambda)\otimes L(\lambda)$, so this scalar determines the invariant product.

\item For any $p \geq 5$, we will show that if $\cdot$ is a $W$-invariant bilinear product on the irreducible module $L(p-1) / \langle m_0 \rangle$, then it must be the zero product. Denote by $\bar x$ the class of an element $x\in L(p-1)$ modulo $\langle m_0\rangle$; then, 
\[ e_0 \left( \overline{m}_1 \cdot \overline{m}_{p-1} \right) = \overline{m}_1 \cdot \overline{m}_{p-1} + (p-1) \overline{m}_1 \cdot \overline{m}_{p-1} =\overline{0}. \]
As all the eigenvalues of the action of $e_0$ on $L(p-1) / \langle m_0 \rangle$ are nonzero, this implies that $\overline{m}_1 \cdot \overline{m}_{p-1}= \overline{0}$. Now, using the $W$-invariance, it is easy to show that $\cdot$ must be the zero product.

\item As $L(p-2)$ is the adjoint module for $W$, it obviously has a $W$-invariant bilinear product. Hence, we exclude this case from further observations.  

\item Let $p\geq 5$ and $\lambda \in \left\{ 1,...,p-3\right\}$. We will show that if $L\left( \lambda \right)$ has a nonzero $W$-invariant bilinear product $\cdot$, then $p=7$ and $\lambda =3$.

By step (i), we may assume that $m_0\cdot m_{p-1}=m_\lambda$. For $k \in \{1,...,p-2 \}$,
\begin{eqnarray*}
(\lambda +1)(k+1)m_k\cdot m_{p-1} & = &e_k(m_0)\cdot m_{p-1} \\
 & = & e_k(m_0\cdot m_{p-1})= e_k(m_\lambda) \\
 & = & (\lambda+(\lambda+1)(k+1))m_{\lambda +k},
\end{eqnarray*}
so
\begin{equation}\label{eq:mkmp-1}
m_{k}\cdot m_{p-1}=\frac{\left( \lambda +1\right) \left( k+1\right) +\lambda}{\left( \lambda +1\right) \left( k+1\right) }m_{\lambda+k },
\end{equation}
for $k\in \left\{ 1,...,p-2\right\}$. On the other hand, $e_1(e_1(m_\lambda))=e_1(e_1(m_0)\cdot m_{p-1}$, and this gives
\begin{equation}\label{eq:m2mp-1}
3\left( 3\lambda +2\right) \left( \lambda +1\right) m_{\lambda +2} =2\left( \lambda +1\right) \left( 2\lambda +3\right) m_{2}\cdot m_{p-1}. 
\end{equation}

If $2\lambda +3=0$, \eqref{eq:m2mp-1} gives $3\lambda+2=0$, and this implies $p=5$ and $\lambda=1$. But then, using \eqref{eq:mkmp-1} we get $e_1(m_2\cdot m_4)=2e_1(m_3)=4m_4$, while at the same time  
\[ e_{1}\left( m_{2}\cdot m_{4}\right) = (e_1m_3) \cdot m_4 + m_3 \cdot (e_1 m_4) = m_{3}\cdot m_{4} = -2m_{4},\]
which is a contradiction. 

Hence, we assume for the rest of the proof $2\lambda +3\ne 0$. By \eqref{eq:m2mp-1}, we have
\[
 m_{2}\cdot m_{6}=\frac{3\left( 3\lambda +2\right) }{2\left( 2\lambda +3\right) }m_{\lambda +2}.
  \]
Comparing this with \eqref{eq:mkmp-1} with $k=2$ we obtain
\[
 \frac{3\left( 3\lambda +2\right) }{2\left( 2\lambda +3\right) } = \frac{4\lambda +3}{3\left( \lambda +1\right) }, \text{ so } \lambda \left( 11\lambda +9\right) =0.
\]
If $p=11$, the above relation implies that $\lambda =0$, which is a contradiction with our choice of $\lambda $, so no invariant bilinear product exists for $p=11$.

For the rest of the proof, assume that $p\neq 11$ and hence $
\lambda =-\frac{9}{11}$.
Now, from $e_{1}\left( e_{1}\left( e_{1}(m_{\lambda} )\right) \right) =e_{1}\left(e_{1}\left( e_{1}(m_{0})\cdot m_{p-1}\right) \right)$, we obtain
\[ 
\left( 3\lambda +2\right) \left( 3\lambda +3\right) \left( 3\lambda +4\right) m_{\lambda +3} = \left( 2\lambda +2\right)\left( 2\lambda +3\right) \left( 2\lambda +4\right) m_{3}\cdot m_{p-1}.
\]
Thus,
\[ 
m_{3}\cdot m_{p-1}=\frac{3\left( 3\lambda +2\right) \left( 3\lambda +4\right) }{4\left( 2\lambda +3\right) \left( \lambda +2\right) }m_{\lambda +3}.
\]
Comparing this with \eqref{eq:mkmp-1} for $k=3$ we obtain
\[ 
\frac{3\left( 3\lambda +2\right) \left( 3\lambda +4\right) }{\left( 2\lambda +3\right) \left( \lambda +2\right) } = \frac{5\lambda +4}{\left( \lambda +1\right) }. 
\]
Hence,
\[ 
17\lambda ^{2}+38\lambda +20=0. 
\]
Substituting $\lambda =-\frac{9}{11}$ in this relation we obtain that $p\mid 35$, so either $p=7$ or $p=5$. If $p=5$, then $\lambda =-\frac{9}{11}=1$, and it was shown above that no nonzero invariant bilinear product exists in this case. If $p=7$, then $\lambda =-\frac{9}{11}=3$. This completes the proof. \qedhere
\end{romanenumerate}
\end{proof}

\smallskip

\begin{remark} \label{re:abs irr}
Let $V$ and $U$ be two irreducible modules for a Lie algebra $\cL$ over an arbitrary field $\FF$, and let $\FF_{\text{alg}}$ be an algebraic closure of $\FF$. Suppose that $V\otimes_\FF\FF_{\text{alg}}$ and $U\otimes_\FF\FF_{\text{alg}}$ are isomorphic as modules for $\cL\otimes_\FF \FF_{\text{alg}}$; then, $V$ and $U$ are isomorphic as modules for $\cL$. Indeed, if $V\otimes_\FF\FF_{\text{alg}}$ and $U\otimes_\FF\FF_{\text{alg}}$ are isomorphic, then $\Hom_{\cL}(V,U)\otimes_\FF\FF_{\text{alg}}\cong \Hom_{\cL\otimes_\FF\FF_{\text{alg}}}(V\otimes_\FF\FF_{\text{alg}},U\otimes_\FF\FF_{\text{alg}})\ne 0$, so $\Hom_\cL(V,U)\ne 0$. The result follows since, by irreducibility, any nonzero $\cL$-module homomorphism from $V$ to $U$ is an $\cL$-module isomorphism. In particular, this implies that, even when the ground field is not algebraically closed, Theorem \ref{th:main} applies to modules for the Witt algebra that are absolutely irreducible (i.e., they remain irreducible after extending scalars to an algebraic closure).
\end{remark}

\smallskip


\subsection{Embeddings of $W_1$ and its twisted forms into $G_2$} \label{subsec3}

It is shown in \cite{Premet,Herpel_Stewart} that, over an algebraically closed field of characteristic $7$, the simple Lie algebra of type $G_2$ contains a unique conjugacy class of subalgebras isomorphic to the Witt algebra. With our results above, a different proof may be given, valid for not necessarily algebraically closed fields.

\begin{theorem}\label{th:conjugation}
Let $\FF$ be an arbitrary field of characteristic $7$. Then any two subalgebras $S_1$ and $S_2$ of $\Der(\cC_s)$ isomorphic to the Witt algebra are conjugate: there is an automorphism $\varphi$ of $\cC_s$ such that $S_2=\varphi S_1\varphi^{-1}$.
\end{theorem}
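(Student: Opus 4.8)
The strategy is to reduce the conjugacy statement over an arbitrary field $\FF$ of characteristic $7$ to the uniqueness statement established in Theorem \ref{th:main}, using a Galois descent argument to pass from the algebraically closed case to the general case. First I would recall that $\Der(\cC_s)$ is the split simple Lie algebra of type $G_2$, so over $\FF_{\text{alg}}$ any subalgebra isomorphic to $W_1$ is conjugate to the specific copy $\bar S$ spanned by $\{e_{-1},\dots,e_5\}$ constructed in Section \ref{subsec1}; this is the result of \cite{Premet,Herpel_Stewart}, but it also follows from our setup together with the key input that makes the argument work over $\FF$: the irreducibility properties of the modules involved.

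\textbf{Step 1: the natural module of a Witt subalgebra.} Given a subalgebra $S\subseteq\Der(\cC_s)$ with $S\cong W_1$, consider $\cC_s$ (or rather its trace-zero part $\cC_s^0$, a $7$-dimensional $S$-module, after noting $\Der$ acts trivially on $\FF 1$ in any characteristic $\ne$... actually one must check the $1$ is a derivation-invariant element; since $d(1)=d(1\cdot 1)=2d(1)$ forces $d(1)=0$ only if $\charac\FF\ne 2$, but in characteristic $7$ this is fine). So $\cC_s^0$ is a $7$-dimensional module for $S\cong W_1$ equipped with a nonzero $S$-invariant product, namely the restriction of the multiplication projected to $\cC_s^0$, together with the $S$-invariant norm. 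Over $\FF_{\text{alg}}$ this module decomposes, but the invariant product is nonzero; by Theorem \ref{th:main}(2) together with Remark \ref{re:abs irr}, the only candidate for an absolutely irreducible non-trivial non-adjoint summand carrying a nonzero invariant product is $V_6$. The main work is to argue that $\cC_s^0$ is in fact \emph{isomorphic to $V_6$} as an $S$-module over $\FF$: dimension $7=\dim V_6$ rules out the adjoint ($14$-dimensional) and the trivial module cannot be all of it since the product is nonzero and the norm nondegenerate; one then checks $\cC_s^0$ is absolutely irreducible as an $S$-module (e.g.\ because over $\FF_{\text{alg}}$ the $G_2$-module structure forces it, or by a direct argument that a proper submodule would be isotropic and $S$-invariant, contradicting the structure of $W_1$-submodules).

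\textbf{Step 2: from module isomorphism to conjugacy.} Having an $S$-module isomorphism $\psi:(\cC_s^0,\cdot,q)\to(V_6,\cdot,q)$ that intertwines the invariant products and norms, extend $\psi$ to $\FF 1\oplus\cC_s^0=\cC_s\to\FF 1\oplus V_6$ by $1\mapsto 1$. By Remark \ref{re:explicit_iso}, the target with the Dixmier product is isomorphic to $\cC_s$ via an explicit isomorphism $\theta$; then $\varphi:=\theta\circ\psi$ is an algebra automorphism of $\cC_s$ (it preserves the multiplication because on $\cC_s^0$ the product is determined by $\cdot$ and $q$ via \eqref{eq:xyC0}, and $\varphi(1)=1$). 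By construction, conjugation by $\varphi$ carries $S$ to the copy of $W_1$ acting on $V_6\subseteq\cC_s$ by the derivations $e_{-1},\dots,e_5$ of \eqref{eq:e01-1mi}, \eqref{eq:eks}. Since this holds for both $S_1$ and $S_2$, composing the two automorphisms gives the desired $\varphi$ with $S_2=\varphi S_1\varphi^{-1}$.

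\textbf{The main obstacle.} The crux is Step 1: showing that the $7$-dimensional $S$-module $\cC_s^0$ is forced to be $V_6$ (up to the scaling in Theorem \ref{th:main}), and in particular that the invariant product it carries is, up to a nonzero scalar, the one in \eqref{eq:mimj}, and that the norm is then determined. The delicate points are (a) verifying absolute irreducibility of $\cC_s^0$ as an $S$-module so that Theorem \ref{th:main} and Remark \ref{re:abs irr} apply, and (b) checking that the normalization of the product can be matched — i.e.\ that after rescaling $\psi$ one simultaneously respects both the product $\cdot$ and the norm form $q$, which is possible because both are essentially rigid on an absolutely irreducible module (the space of invariant products is one-dimensional by Theorem \ref{th:main}, and the space of invariant symmetric forms is at most one-dimensional by Schur). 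Once these are in place the rest is bookkeeping with the explicit isomorphism of Remark \ref{re:explicit_iso}.
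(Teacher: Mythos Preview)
Your overall strategy coincides with the paper's: show that for any Witt subalgebra $S\subseteq\Der(\cC_s)$ the module $\cC_s^0$ is forced to be $V_6$ with the product \eqref{eq:mimj}, and then use that the bracket on $\cC_s^0$ determines the Cayley multiplication to build the conjugating automorphism. Your Step~2 is fine (and in fact simpler than you make it: by \eqref{eq:xyy} the bracket alone determines both $b_q$ and the full product, so no separate ``norm matching'' is needed).

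There are, however, two genuine gaps in Step~1. First, you never argue that the action of $S$ on $\cC_s^0$ is \emph{restricted}, and Theorem~\ref{th:main} classifies only restricted irreducible $W_1$-modules; without this you cannot invoke it. This is not automatic: an abstract isomorphism $S\cong W_1$ need not respect the $p$-mappings inherited from $\frgl(\cC_s)$. The paper handles this by first proving that $S$ is a \emph{maximal} subalgebra of $\frg=\Der(\cC_s)$ (the restriction of the Killing form to $S$ vanishes, so $\frg/S$ is the dual of the adjoint, hence irreducible), and then observing that $S$ is an ideal in its $p$-closure, so maximality forces $S$ to equal its $p$-closure and thus to be a restricted subalgebra. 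Second, your sketch for absolute irreducibility is not adequate: irreducibility of $\cC_s^0$ under $G_2$ says nothing about irreducibility under the proper subalgebra $S$, and a hypothetical proper $S$-submodule need not be isotropic. The paper's argument (over $\FF_{\text{alg}}$) is a genuine case analysis: a nondegenerate $1$-dimensional trivial submodule would give an embedding of $W_1$ into $\frsl_3(\FF)$, which is impossible; a totally isotropic one would force all composition factors to be $1$-dimensional, hence $\rho(S)$ nilpotent, contradicting $\rho(\tilde e_0+\tilde e_i)^p=\rho(\tilde e_0+\tilde e_i)$ (here the restrictedness is used again); and a $6$-dimensional submodule reduces to the nondegenerate $1$-dimensional case via its orthogonal complement. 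Once these two points are supplied, your argument becomes the paper's.
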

\begin{proof}
Let $S:=\espan{e_i:-1\leq i\leq 5}$ be the subalgebra of $\frg:=\Der(\cC_s)$ isomorphic to the Witt algebra given by equations \eqref{eq:e01-1mi} and \eqref{eq:eks}, and let $\tilde S$ be an arbitrary subalgebra of $\frg$ isomorphic to the Witt algebra. Take a basis $\{\tilde e_i:-1\leq i\leq 5\}$ of $\tilde S$ with $[\tilde e_i,\tilde e_j]=(j-i)\tilde e_{i+j}$. We will follow several steps:

\begin{romanenumerate}
\item $\tilde S$ is a maximal subalgebra of $\frg$.
\begin{proof}
Since the Witt algebra does not admit nonsingular invariant bilinear forms (see, e.g. \cite[Theorem 4.2]{F86}), the restriction of the Killing form $\kappa$ of $\frg$ to $\tilde S$ is trivial; hence, by dimension count, $\frg/\tilde S$ is isomorphic, as a module for $\tilde S$, to the dual of the adjoint module for $\tilde S$. In particular, $\frg/\tilde S$ is an irreducible module for $\tilde S$, and this shows the maximality of $\tilde S$.
\end{proof}

\item The representation of $\tilde S$ on $\cC_s^0$, the subspace of trace zero elements of $\cC_s$, is restricted and absolutely irreducible.
\begin{proof}
As $\tilde S$ is an ideal in its $p$-closure in $\frg$, step (i) implies that $\tilde S$ is a restricted subalgebra of $\frg$, and hence, the corresponding representation of $\tilde S$ on $\cC_s^0$ is restricted. 

In order to prove the second part, we may assume that $\FF$ is algebraically closed. Recall that the possible dimensions of restricted irreducible representations of the Witt algebra are $1$, $6$ and $7$. Suppose that $\cC_s^0$ has a one-dimensional trivial $\tilde S$-submodule $X$. The space $X$ may be either nondegenerate or totally isotropic with respect to the symmetric bilinear form $b_q$ of $\cC_s$. 

If $X = \FF x$ is nondegenerate, then $q(x) \neq 0$ and $b_q(x,1) = 0$. By \eqref{eq:CayleyHamilton}, $\FF 1 \oplus X$ is a two-dimensional composition subalgebra of $\cC_s$, so it is isomorphic to $\FF p_1\oplus\FF p_2$ (with $p_i$ as in Table \ref{ta:good_basis}). Now, by \cite[Corollary 1.7.3]{SV00}, the isomorphism $\FF 1 \oplus \FF x  \cong \FF p_1 \oplus \FF p_2$ may be extended to an automorphism of $\cC_s$. As $\tilde S$ annihilates $\FF 1 \oplus \FF x$, and the subalgebra of the derivations that annihilate $\FF p_1\oplus\FF p_2$ is isomorphic to $\frsl_3(\FF)$ (\cite[Proposition 4.29]{EKmon}), we obtain that the Witt algebra embeds into $\frsl_3(\FF)$, which is impossible. 

If $X$ is totally isotropic, then $X \subseteq X^{\perp}$, and $X^{\perp} / X$ is a $5$-dimensional module for $\tilde S$. As this cannot be irreducible, we deduce that all the composition factors of $\cC_s^0$ are one-dimensional. Hence, the representation $\rho$ of $\tilde S$ on $\cC_s^0$ is nilpotent. Since $\ad_{(\tilde e_0 + \tilde e_i)}$ is diagonalizable for $i\ne 0$, $-1 \leq i \leq 5$, then $\ad_{(\tilde e_0 + \tilde e_i)}^p = \ad_{(\tilde e_0+ \tilde e_i)}$. As the representation of $\tilde S$ on $\cC_s^0$ is restricted, then $\rho(\tilde e_0 + \tilde e_i)^p = \rho(\tilde e_0 + \tilde e_i)$, and the nilpotency implies that $\rho(\tilde e_0+\tilde e_i) =0$, for any $i\ne 0$, $-1 \leq i \leq 5$. This is a contradiction.

Finally, if $Y$ is a $6$-dimensional irreducible $\tilde S$-submodule for $\cC_s^0$, then it must be nondegenerate with respect to $b_q$. This implies that $Y^{\perp}$ is a one-dimensional nondegenerate submodule, and we may use the above argument with $X=Y^{\perp}$. 
\end{proof}

\item Note that $\cC_s^0$ is not the adjoint module for $\tilde S$ because of the existence of the invariant bilinear form $b_q$ on $\cC_s^0$. 

\item The previous steps together with Theorem \ref{th:main} and Remark \ref{re:abs irr} imply that, even when $\FF$ is not algebraically closed, there is a unique possibility, up to isomorphism, for $\cC_s^0$ as a module for $\tilde S$, and a unique, up to scalars, nonzero $\tilde S$-invariant product on $\cC_s^0$. Therefore, there is a basis $\{\tilde m_i: 0\leq i\leq 6\}$ of $\cC_s^0$ with 
\[
\tilde e_k(\tilde m_i)=\begin{cases} (i+4k+4)\tilde m_{i+k}&\text{if $i+k\geq 6$,}\\
   0&\text{otherwise,}
   \end{cases}
\]
and 
\[
\frac{1}{2}[\tilde m_i\tilde m_j]=c(i,j)\tilde m_{i+j-3}
\]
for $-1\leq k\leq 5$ and $0\leq i,j\leq 6$. Since the multiplication on a Cayley algebra is determined by the bracket of trace zero elements, the linear map $\varphi$ that takes $1$ to $1$ and $m_i$ to $\tilde m_i$, for $i \in \{ 0,\ldots,6 \}$, is an automorphism of $\cC_s$ such that $\tilde S=\varphi S\varphi^{-1}$.

\end{romanenumerate}
\end{proof}

\smallskip

To finish this section, note that the Witt algebra $W$ over a field $\FF$ of characteristic $7$ is equal to the Lie algebra $\FF[X]/(X^7)=\FF[Z]/(Z^7-1)$, where $Z=X+1$. Denote by $z$ the class of $Z$ modulo $(Z^7-1)=((Z-1)^7)$. The elements $f_i=z^{i+1}\frac{\partial\ }{\partial z}$, for $i \in \{-1,0,\ldots 5\}$, form a basis of $W$ with
\begin{equation}\label{eq:secondWitt_basis}
[f_i,f_j]=(j-i)f_{i+j},
\end{equation}
where, contrary to \eqref{eq:Witt_basis}, the indices are taken modulo $7$. Now we may define an action of $W$ on $V_6$ by changing slightly the definition in \eqref{eq:eks}:
\begin{equation}\label{eq:fks}
f_k(m_i)=(i+4k+4)m_{i+k},
\end{equation}
for $-1\leq k\leq 5$ and $0\leq i\leq 6$, with indices taken modulo $7$. This gives another representation of $W$ on $V_6$:
\[
\begin{split}
f_r(f_s(m_i))&-f_s(f_r(m_i))\\
 &=\Bigl((i+s+4r+4)(i+4s+4)-(i+r+4s+4)(i+4r+4)\Bigr)m_{i+s+r}\\
 &=(s-r)(i+4(r+s)+4)m_{i+s+r}\\
 &=[f_r,f_s](m_i).
\end{split}
\]
Equation \eqref{eq:cij_recurs} proves that this is a representation by derivations, so $W$ embeds in $\Der(V_6,\cdot)$, and hence on $\Der(\cC_s)$ as well. This embedding is different from the one obtained through \eqref{eq:eks}, but Theorem \ref{th:conjugation} shows that they are conjugate.

We may even go a step further. For any $0\ne \alpha\in\FF$, consider the Lie algebra $W^\alpha=\Der\left(\FF[Y]/(Y^7-\alpha)\right)$, and denote by $y$ the class of $Y$ modulo $(Y^7-\alpha)$. For any natural number $i$, consider the element $\tilde f_i=y^{i+1}\frac{\partial\ }{\partial y}$ in $W^\alpha$, so $\{\tilde f_i:-1\leq i\leq 5\}$ is a basis of $W^\alpha$, and $f_{i+7}=\alpha f_i$ for any $i$. Define $m_{i+7}\bydef \alpha m_i$. For $0\leq i,j\leq 6$, $c(i,j)=0$ if $i+j-3>6$ or $i+j-3<0$, so if we modify \eqref{eq:fks} as follows:
\begin{equation}\label{eq:ftildeks}
\tilde f_k(m_i)=(i+4k+4)m_{i+j-3},
\end{equation}
for any $i,k$ (but now $\tilde f_{k+7}=\alpha\tilde f_k$ and $m_{i+7}=\alpha m_i$). The same computations as above show that $W^\alpha$ embeds in $\Der(\cC_s)$.

The twisted forms of the Witt algebra are precisely the algebras $W^\alpha$; if $0\ne\alpha\in\FF^7$, $W^\alpha$ is isomorphic to the Witt algebra, while if $\alpha\in\FF\setminus\FF^7$, $W^\alpha$ is the Lie algebra of derivations of the purely inseparable field extension $\FF[Y]/(Y^7-\alpha)$. Two algebras $W^\alpha$ and $W^\beta$ are isomorphic if and only if so are the algebras $\FF[Y]/(Y^7-\alpha)$ and $\FF[Y]/(Y^7-\beta)$. (See \cite{Allen_Sweedler69} or \cite{Waterhouse71}.)

Therefore, Theorems \ref{th:W1_char7} and \ref{th:conjugation} may be extended as follows:

\begin{theorem}\label{th:Walpha}
Over a field of characteristic $7$, all the twisted forms of the Witt algebra embed in the Lie algebra of derivations of the split Cayley algebra. Moreover, any two embeddings of the same twisted form of the Witt algebra in $\Der(\cC_s)$ are conjugate.
\end{theorem}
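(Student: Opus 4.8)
For the first assertion there is nothing new to prove: the modification \eqref{eq:ftildeks} of \eqref{eq:fks} was shown above to define, for each $0\ne\alpha\in\FF$, a representation of $W^\alpha$ on $(V_6,\cdot)$ by derivations, which extends (by $1\mapsto 0$, using \eqref{eq:CsDixmier} and \eqref{eq:fgg}) to a representation of $W^\alpha$ by derivations of $\cC_s\cong\FF1\oplus V_6$; this is the required embedding. Fix it and let $S^\alpha\subseteq\frg\bydef\Der(\cC_s)$ be its image. For the conjugacy statement it suffices to prove that every subalgebra $S$ of $\frg$ isomorphic to $W^\alpha$ equals $\varphi S^\alpha\varphi^{-1}$ for some $\varphi\in\Aut(\cC_s)$, since then the images of any two embeddings of $W^\alpha$ are both conjugate to $S^\alpha$, hence to each other.

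The plan is to rerun the proof of Theorem \ref{th:conjugation} with $\tilde S\cong W_1$ replaced by $S\cong W^\alpha$, relying on the fact that the properties of $W_1$ used there persist for its twisted forms. Indeed, $W^\alpha$ is simple and restricted (it is the derivation algebra of a truncated polynomial algebra, or of a purely inseparable field extension of exponent one), and $W^\alpha\otimes_\FF\FF_{\text{alg}}\cong W_1\otimes_\FF\FF_{\text{alg}}$ as restricted Lie algebras over an algebraic closure, because $\alpha$ is a seventh power there; also $W^\alpha$ admits no nonzero invariant bilinear form, since a nonsingular one would survive scalar extension to $\FF_{\text{alg}}$, contradicting the analogous statement for $W_1$ (\cite[Theorem 4.2]{F86}), and by simplicity every nonzero invariant form is nonsingular. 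Consequently, just as in step (i) of the proof of Theorem \ref{th:conjugation}, the Killing form of $\frg$ restricts to $0$ on the seven-dimensional subalgebra $S$, so $\frg/S$ is isomorphic to the dual of the adjoint module of $S$, hence irreducible, hence $S$ is maximal in $\frg$; as in step (ii) there, the $p$-closure of $S$ in $\frg$ normalizes $S$, so by maximality it is $S$ or $\frg$, and it cannot be $\frg$ (which is simple), whence $S$ equals its own $p$-closure and acts restrictedly on $\cC_s^0$; extending scalars to $\FF_{\text{alg}}$ and applying the geometric case analysis of step (ii) of Theorem \ref{th:conjugation} to the Witt subalgebra $S\otimes\FF_{\text{alg}}$ of $\Der\bigl((\cC_s)_{\FF_{\text{alg}}}\bigr)$, this representation is absolutely irreducible; and it is not the adjoint module of $S$, because $b_q$ restricts to a nonzero invariant bilinear form on $\cC_s^0$ while $W^\alpha$ has none.

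Next I would identify $\cC_s^0$ as an $S$-module. Over $\FF_{\text{alg}}$, Theorem \ref{th:main}(2) applied to $S\otimes\FF_{\text{alg}}\cong W_1\otimes\FF_{\text{alg}}$ shows that $\cC_s^0\otimes\FF_{\text{alg}}$ is the module $V_6$ and that it carries a nonzero invariant product, unique up to a scalar; by Remark \ref{re:abs irr} and the same scalar-extension argument applied to $\Hom$-spaces, it follows that over $\FF$ any two absolutely irreducible restricted non-adjoint $W^\alpha$-module structures on a seven-dimensional space are isomorphic, and that a nonzero invariant product on such a module is unique up to a nonzero scalar. Now fix a Lie algebra isomorphism $\mu:S^\alpha\to S$. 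Transporting the $S$-action along $\mu$, the space $\cC_s^0$ carries two $W^\alpha$-module structures---one from $S^\alpha$, one from $S$ via $\mu$---and in both the intrinsic product $u\cdot v\bydef\tfrac12[u,v]$ (the projection to $\cC_s^0$ of the product of $\cC_s$, see \eqref{eq:xyC0}) is a nonzero invariant product, being invariant under all of $\frg$. By the uniqueness just obtained, there is a linear bijection $\psi$ of $\cC_s^0$ intertwining these two $W^\alpha$-actions and carrying $\cdot$ to a nonzero scalar multiple of $\cdot$; after rescaling $\psi$ we may assume $\psi(u\cdot v)=\psi(u)\cdot\psi(v)$.

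Finally, since the multiplication and the norm of a Cayley algebra are determined by the bracket of its trace-zero elements (see \eqref{eq:xyC0}--\eqref{eq:xyy}), the map $\varphi$ of $\cC_s$ that fixes $1$ and restricts to $\psi$ on $\cC_s^0$ is an automorphism of $\cC_s$; as $\psi$ intertwines the $W^\alpha$-action of $S^\alpha$ with that of $S$ via $\mu$, and both actions annihilate $1$, we get $\varphi s\varphi^{-1}=\mu(s)$ in $\frg$ for every $s\in S^\alpha$, that is, $\varphi S^\alpha\varphi^{-1}=S$ (and in fact $\Ad\,\varphi$ conjugates the fixed embedding onto the given one). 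The step I expect to require the most care is importing restrictedness and, above all, absolute irreducibility of the $S$-action on $\cC_s^0$: Theorem \ref{th:main} cannot be quoted for $W^\alpha$ itself, only after extension of scalars, so one must genuinely rerun the geometric case distinctions of the proof of Theorem \ref{th:conjugation} (totally isotropic versus nondegenerate one-dimensional submodules, the obstruction to embedding $W_1$ into $\frsl_3$, and the nilpotency argument) over $\FF_{\text{alg}}$ and then descend via Remark \ref{re:abs irr}.
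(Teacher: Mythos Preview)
Your proposal is correct and follows the same approach as the paper, which simply states that ``the last part of this theorem follows by the same arguments as in the proof of Theorem~\ref{th:conjugation}''; you have carefully spelled out how each step (maximality via the absence of invariant forms, restrictedness via the $p$-closure and maximality, absolute irreducibility by extending scalars and rerunning the case analysis, exclusion of the adjoint module, and the uniqueness of the module and of the invariant product) adapts to a twisted form $W^\alpha$. Your abstract reformulation of step~(iv)---using a module isomorphism $\psi$ and the one-dimensionality of $\Hom_{W^\alpha}(M\otimes M,M)$ rather than exhibiting an explicit basis $\{\tilde m_i\}$---is a clean way to handle the fact that the standard basis of $W^\alpha$ acts via \eqref{eq:ftildeks} rather than \eqref{eq:eks}, and it leads to the same conclusion.
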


The last part of this theorem follows by the same arguments as in the proof of Theorem \ref{th:conjugation}.

\begin{remark}
If $\cC$ is a non split Cayley algebra (and hence it is a division algebra, since $q$ is anisotropic), then $\Der(\cC)$ contains no nonzero nilpotent derivation. Indeed, if $d\in\Der(\cC)$ is nilpotent, then $\ker d\vert_{\cC^0}$ is a subspace of $\cC^0$ and, since $q$ is anisotropic, $\cC^0=\ker d\vert_{\cC^0}\oplus \left(\ker d\vert_{\cC^0}\right)^\perp$. Besides, since $q$ is invariant under the action of $d$ (because of \eqref{eq:CayleyHamilton}), $d$ leaves $\left(\ker d\vert_{\cC^0}\right)^\perp$ invariant; this is a contradiction because the nilpotency of $d$ implies that any nonzero invariant subspace has nontrivial intersection with the kernel.

Therefore, $\Der(\cC)$ cannot contain subalgebras isomorphic to twisted forms of the Witt algebra, because these algebras contain nilpotent elements. (Recall that the action of the Witt algebra on $\cC_s^0$ is restricted, and hence so is the action on $\cC^0$ of any subalgebra of $\Der(\cC)$ isomorphic to a twisted form of the Witt algebra.)
\end{remark}

\medskip


\section{Characteristic $3$}\label{se:char3}

Let $\cC$ be a Cayley algebra over a field $\FF$ of characteristic $p \neq 2$. Then, the subspace of trace zero elements $\cC^0$ is closed under the commutator $[\,, \,]$, and it satisfies \eqref{eq:xyy}. The anticommutative algebra $\left( \cC^0, [\, , \, ] \right)$ is a central simple Malcev algebra. If $p \ne 2,3$, any central simple non Lie Malcev algebra is isomorphic to one of these. However, if $p=3$, then $\cC^0$ is a simple Lie algebra; more precisely, it is a twisted form of the projective special linear Lie algebra $\frpsl_3(\FF)$, and any such twisted form is obtained, up to isomorphism, in this way. (See \cite{AEMN} or \cite[Theorem 4.26]{EKmon}.)

Denote by $\AAut(\cA)$ the affine group scheme of automorphisms of a finite-dimensional algebra $\cA$. Equations \eqref{eq:xyC0} and \eqref{eq:xyy} show that the restriction map
\begin{equation}\label{eq:AutCAutC0}
\begin{split}
\AAut(\cC)&\longrightarrow \AAut(\cC^0)\\
f\ &\mapsto\quad f\vert_{\cC^0},
\end{split}
\end{equation}
gives an isomorphism of group schemes. (The reader may consult \cite[Chapter VI]{KMRT} for the basic facts of affine group schemes.)

For the rest of this section, assume that $\cC$ is a Cayley algebra over a field $\FF$ of characteristic $3$, and write $\frg\ :=\Der(\cC)$. Any derivation $d\in\frg$ satisfies $d(1)=0$ and $d(\cC^0)\subseteq \cC^0$, and hence, because of \eqref{eq:xyC0} and \eqref{eq:xyy}, we may identify $\frg$ with $\Der(\cC^0)$. Since $\cC^0$ is a Lie algebra, $\ad_{\cC^0}$ is an ideal of $\frg$: the ideal of inner derivations. In fact, $\ad_{\cC^0}$ is the only proper ideal of $\frg$, and the quotient $\frg/\ad_{\cC^0}$ is again isomorphic to $\cC^0\cong \ad_{\cC^0}$ (see \cite{AEMN}).

In the split case, $\ad_{\cC_s^0}$ is the ideal of the Chevalley algebra of type $G_2$ generated by the root spaces corresponding to the short roots (see \cite[p.~156]{Steinberg}).

\begin{lemma}\label{le:inner}
Any derivation of $\frg :=\Der(\cC)$ is inner.
\end{lemma}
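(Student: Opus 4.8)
The plan is to exploit the structure of $\frg=\Der(\cC)$ as an extension of $\ad_{\cC^0}$ by the quotient $\frg/\ad_{\cC^0}$, both of which are isomorphic to the simple Lie algebra $\cC^0$ (a twisted form of $\frpsl_3(\FF)$). First I would reduce to the split case by base change: a derivation $D$ of $\frg$ is inner if and only if $D\otimes\id$ is an inner derivation of $\frg\otimes_\FF\FF_{\mathrm{alg}}$, since $\ad:\frg\to\Der(\frg)$ is $\FF$-linear and injective (as $\frg$ has trivial center — it is an extension of simple by simple with no common trivial constituent), so $\Der(\frg)/\ad\frg$ commutes with base field extension. Thus it suffices to treat $\cC=\cC_s$, where $\ad_{\cC_s^0}$ is the short-root ideal $I$ of the Chevalley algebra $\frg$ of type $G_2$ in characteristic $3$.

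Next, given $D\in\Der(\frg)$, I would show $D$ preserves the unique proper ideal $I=\ad_{\cC_s^0}$ (automatic, since automorphisms and derivations permute ideals, and $I$ is the only proper one), and hence induces a derivation $\bar D$ on the simple quotient $\frg/I\cong\cC_s^0$. Here I would invoke that the simple Lie algebra $\cC_s^0\cong\frpsl_3(\FF)$ in characteristic $3$ — more precisely, as it is described here as a twisted/central-simple Malcev-type algebra with known derivation algebra — has all its derivations inner, or at least that $\Der(\frg/I)$ is accounted for by the image of $\frg$; one can cite the structure results in \cite{AEMN}. So after subtracting an inner derivation $\ad_x$ for a suitable $x\in\frg$, we may assume $D$ induces the zero derivation on $\frg/I$, i.e.\ $D(\frg)\subseteq I$ and $D(I)=0$ (the latter because $D|_I$ would be a derivation of the simple algebra $I$ coming from $\frg$-action, again inner by the same citation, and the ideal $I$ is its own derivation-critical piece; subtract a further inner derivation to kill it).

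Then the remaining case is a derivation $D$ with $D(I)=0$ and $D(\frg)\subseteq I$; such a $D$ is a $1$-cocycle representing a class in $H^1(\frg/I, \mathrm{Hom}_\FF(\frg/I, I)) $, or more concretely, $D$ factors as $\frg\twoheadrightarrow\frg/I\xrightarrow{\phi} I$ where $\phi$ is an $I$-module (equivalently $\frg$-module) homomorphism from the adjoint module $\frg/I\cong I$ to $I$. By Schur-type considerations — $I\cong\frg/I$ is an absolutely simple module for the simple Lie algebra $I$ (the short-root ideal acting on itself and on the quotient by the standard adjoint-type action of $\frpsl_3$) — any such $\phi$ is a scalar multiple of the fixed identification $\frg/I\cong I$. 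One checks that this scalar corresponds precisely to the inner derivation $\ad_x$ for $x$ in a complement of $I$ (the long-root part), so $D$ is inner after all. The main obstacle I anticipate is this last module-theoretic step: verifying that $\mathrm{Hom}_{\frg}(\frg/I, I)$ is one-dimensional and that the resulting derivations are exactly the inner ones coming from the long-root Chevalley generators — this requires knowing the $\frpsl_3(\FF)$-module structure of the $G_2$-Chevalley algebra in characteristic $3$ precisely (two copies of the adjoint-type $7$-dimensional constituent, with the extension class nonsplit), which one extracts from \cite{AEMN} or a direct computation with the Chevalley basis.
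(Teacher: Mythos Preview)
Your plan has a genuine gap in the final step, and it misses the one-line observation that makes the whole argument short.

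First, the two successive subtractions you propose cannot in general be made compatible. After subtracting $\ad_x$ to arrange $D(\frg)\subseteq I$, you then want to subtract a further $\ad_y$ (with $y\in\frg$ chosen so that $\ad_y\vert_I=D\vert_I$) to force $D(I)=0$. But $\ad_y(\frg)\subseteq I$ only when $\bar y$ is central in $\frg/I$, i.e.\ when $y\in I$; and since $\Der(I)=\frg$ properly contains $\ad_I$, you cannot always choose such a $y$ inside $I$. So after the second subtraction you typically lose $D(\frg)\subseteq I$, and you never actually reach the situation ``$D(I)=0$ and $D(\frg)\subseteq I$'' that your last paragraph assumes.

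Second, even granting that situation, your claim that the induced map $\phi:\frg/I\to I$ is a $\frg$-module (or $I$-module) homomorphism is false. The Leibniz rule gives $\phi(\overline{[a,b]})=[D(a),b]+[a,D(b)]$, not $[a,D(b)]$; the extra term $[D(a),b]$ vanishes only if $D(a)$ centralizes $\frg$, which already forces $D=0$. (Equivalently: as an $I$-module $\frg/I$ is \emph{trivial}, so $\Hom_I(\frg/I,I)=0$, not one-dimensional.) And your proposed inner witness $\ad_x$ with $x$ in a long-root complement of $I$ cannot satisfy $\ad_x(\frg)\subseteq I$, so it does not even lie in the class of derivations you are analyzing.

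The paper's proof is much shorter and avoids all of this. Work directly over $\FF$ (no base change needed). Since $I=[I,I]$, any $d\in\Der(\frg)$ preserves $I$, so $d\vert_I\in\Der(I)$. But $\Der(I)\cong\Der(\cC^0)=\Der(\cC)=\frg$ (every derivation of $\cC^0$ extends to $\cC$ by $1\mapsto 0$), so there is $\delta\in\frg$ with $d\vert_I=\ad_\delta\vert_I$. Set $\tilde d=d-\ad_\delta$; then $\tilde d(I)=0$. Now the key observation: for any $a\in\frg$ and $b\in I$,
\[
[\tilde d(a),b]=\tilde d([a,b])-[a,\tilde d(b)]=0-0=0,
\]
so $\tilde d(\frg)$ lies in the centralizer of $I$ in $\frg$, which is $0$. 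Hence $\tilde d=0$ and $d=\ad_\delta$ is inner. The point you missed is that $\tilde d(I)=0$ \emph{alone} forces $\tilde d=0$ via this centralizer argument; there is no residual one-parameter family to analyze.
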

\begin{proof}
Let $d\in \Der(\frg)$ and let $\fri=\ad_{\cC^0}$ be the unique proper ideal of $\frg$. The ideal $\fri$ is simple, so $\fri=[\fri,\fri]$, and hence $d(\fri)\subseteq [d(\fri),\fri]\subseteq \fri$, so $d\vert_\fri$ is a derivation of $\fri=\ad_{\cC^0}\cong\cC^0$. Since any derivation $\delta$ of $\cC^0$ extends to a derivation of $\cC$ by means of $\delta(1)=0$, it follows that there exists a $\delta\in\frg$ such that $d\vert_\fri=\ad_\delta\vert_\fri$. That is, $d(f)=[\delta,f]$ for any $f\in\fri=\ad_{\cC^0}$, so the derivation $\tilde d=d-\ad_\delta$ satisfies $\tilde d(\fri)=0$.

But if $\tilde d$ is a derivation of $\frg$ such that $\tilde d(\fri)=0$, then $[\tilde d(\frg),\fri]\subseteq \tilde d\bigl([\frg,\fri]\bigr)+[\frg,\tilde d(\fri)]=0$, so $\tilde d(\frg)$ is contained in the centralizer of $\fri$ in $\frg$, which is trivial. In particular, the derivation $\tilde d=d-\ad_\delta$ above is trivial, and hence $d=\ad_\delta$ is inner.
\end{proof}

\begin{theorem}\label{th:char3}
Let $\cC$ be a Cayley algebra over a field $\FF$ of characteristic $3$, and let $\frg :=\Der(\cC)$ be its Lie algebra of derivations. The the adjoint map
\[
\begin{split}
\Ad: \AAut(\cC)&\longrightarrow \AAut(\frg)\\
       f\ &\mapsto\ \varphi(f):d\mapsto fdf^{-1},
\end{split}
\]
is an isomorphism of affine group schemes.
\end{theorem}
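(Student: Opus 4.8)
The plan is to use the group-scheme isomorphism \eqref{eq:AutCAutC0} together with the structure of $\frg$ recalled above: $\fri:=\ad_{\cC^0}$ is the unique proper ideal of $\frg$, it is simple and isomorphic to $\cC^0$ via $x\mapsto\ad_x$, the centre of $\frg$ and the centralizer of $\fri$ in $\frg$ are both $0$ (these last facts occur in the proof of Lemma~\ref{le:inner}), and $\Der(\frg)=\ad_\frg\cong\frg$; since $\frg\otimes_\FF\FF_{\text{alg}}\cong\Der(\cC_s)$ is the Chevalley algebra of type $G_2$, all these algebras have dimension $14$. First I would reduce to the case where $\FF$ is algebraically closed and $\cC=\cC_s$: the formation of $\AAut$ commutes with base change, $\Ad$ is defined over $\FF$, the property of being an isomorphism of affine group schemes can be checked after the faithfully flat base change $\FF\hookrightarrow\FF_{\text{alg}}$, and $\cC\otimes_\FF\FF_{\text{alg}}\cong\cC_s$.

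Next I would show that $\Ad$ is a monomorphism. It is plainly a homomorphism of affine group schemes, since conjugating a derivation of $\cC\otimes R$ by an $R$-algebra automorphism yields a derivation. If $\Ad(f)=\id$ for some $f\in\AAut(\cC)(R)$, then $f$ commutes with every $\ad_x$ with $x\in\cC^0\otimes R$, so $\ad_{f(x)-x}=0$; as $Z(\cC^0)=0$ and $-\otimes_\FF R$ is exact, $\cC^0\otimes R$ is centreless, hence $f$ is the identity on $\cC^0\otimes R$ and therefore, by \eqref{eq:AutCAutC0}, $f=\id$. Since a homomorphism of affine group schemes of finite type over a field with trivial kernel is a closed immersion, $\Ad$ is one. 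Its differential at the identity sends $D\in\Der(\cC)=\frg$ to $\ad_D\in\Der(\frg)$; this is injective ($\frg$ is centreless) and surjective ($\Der(\frg)=\ad_\frg$), so $\Ad$ induces an isomorphism on the tangent space at the identity and, by homogeneity, at every point of $\AAut(\cC)$.

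Then I would prove that $\Ad$ is surjective on $\FF$-points. Let $\psi\in\Aut(\frg)$. It must fix the unique proper ideal $\fri$, hence restricts to an automorphism of $\fri\cong\cC^0$, that is, to an element of $\Aut(\cC^0)\cong\Aut(\cC)$; let $f\in\Aut(\cC)$ be the corresponding automorphism. Then $\psi$ and $\Ad(f)$ have the same restriction to $\fri$, so $\mu:=\psi\circ\Ad(f)^{-1}$ is the identity on $\fri$; for $d\in\frg$ and $x\in\cC^0$ one computes $[\mu(d),\ad_x]=\mu\bigl([d,\ad_x]\bigr)=\mu(\ad_{d(x)})=\ad_{d(x)}=[d,\ad_x]$, whence $\mu(d)-d\in C_\frg(\fri)=0$. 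Thus $\mu=\id$ and $\psi=\Ad(f)$.

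Finally I would conclude as follows. The closed subscheme $\Ad\bigl(\AAut(\cC)\bigr)$ of $\AAut(\frg)$ is isomorphic to $\AAut(\cC)$, the smooth split group scheme of type $G_2$, which is $14$-dimensional; hence $\dim\AAut(\frg)\ge14$. On the other hand $\dim\AAut(\frg)\le\dim\Lie\AAut(\frg)=\dim\Der(\frg)=14$, so the local ring of $\AAut(\frg)$ at the identity is regular (its Krull dimension $14$ equals its embedding dimension), $\AAut(\frg)$ is therefore smooth, in particular reduced. Being surjective on $\FF$-points, the closed subscheme $\Ad(\AAut(\cC))$ contains every closed point of the reduced scheme $\AAut(\frg)$, hence coincides with it; so $\Ad$ is a surjective closed immersion onto a reduced scheme, i.e. an isomorphism of affine group schemes, and the theorem follows by the initial reduction. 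The main difficulty is the surjectivity and, above all, upgrading it from $\FF$-points to a scheme isomorphism: one needs $\fri$ to be a characteristic ideal of $\frg$ with $Z(\frg)=0=C_\frg(\fri)$ (part of the structure theory behind Lemma~\ref{le:inner}) and the smoothness of $\AAut(\cC)\cong\mathbf{G}_2$ together with the equality $\dim\AAut(\cC)=\dim\Der(\frg)$ to force the a priori possibly non-reduced group scheme $\AAut(\frg)$ to be reduced.
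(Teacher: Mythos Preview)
Your proposal is correct and follows essentially the same strategy as the paper's proof: both verify that $\Ad$ is injective (on points, using $f\ad_xf^{-1}=\ad_{f(x)}$ and the centrelessness of $\cC^0$), surjective on geometric points (via the uniqueness of the ideal $\fri=\ad_{\cC^0}$ and $C_\frg(\fri)=0$), and then use the smoothness of $\AAut(\cC)$ together with Lemma~\ref{le:inner} to force $\AAut(\frg)$ to be smooth of the same dimension. The only difference is cosmetic: the paper does not perform the preliminary base-change reduction and wraps up the final step by invoking \cite[(22.5)]{KMRT} (closed embedding with bijective differential between smooth schemes of equal dimension is an isomorphism), whereas you unpack this as ``closed immersion $+$ surjective on closed points of a reduced target''.
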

\begin{proof}
Let $\FF_{\text{alg}}$ be an algebraic closure of $\FF$. The group homomorphism $\Ad_{\FF_{\text{alg}}}:\Aut(\cC_{\FF_{\text{alg}}})\rightarrow \Der(\cC_{\FF_{\text{alg}}})$ is injective, where $\cC_R=\cC\otimes_\FF R$, for any unital commutative and associative algebra $R$ over $\FF$. This is because $f\ad_xf^{-1}=\ad_{f(x)}$ for any $x\in \cC^0_{\FF_{\text{alg}}}$ and $f\in\Aut(\cC_{\FF_{\text{alg}}})$, so $\Ad_{\FF_{\text{alg}}}(f)=\id$ implies $f\vert_{\cC^0_{\FF_{\text{alg}}}}=\id$; hence, $f=\id$. 

As any $\varphi\in\Aut\left(\frg_{\FF_{\text{alg}}}\right)$ preserves the only proper ideal $\fri_{\FF_{\text{alg}}}=\ad_{\cC^0_{\FF_{\text{alg}}}}$, the automorphism $\varphi$ induces an automorphism $f$ of $\cC^0_{\FF_{\text{alg}}}$, which extends to an automorphism of $\cC_{\FF_{\text{alg}}}$ also denoted by $f$. Then $\varphi\Ad(f)^{-1}\vert_{\fri_{\FF_{\text{alg}}}}=\id$, and, as in the proof above, a simple argument gives $\varphi\Ad(f)^{-1}=\id$, so $\varphi=\Ad(f)$. This shows that $\Ad_{\FF_{\text{alg}}}$ is a bijection.

But it also shows that $\dim \AAut(\frg)=\dim\AAut(\cC)$ and, since this latter group scheme is smooth (this follows from \cite[Proposition 2.2.3]{SV00}, see also \cite[Proof of Theorem 4.35]{EKmon}), 
we obtain $\dim\AAut(\frg)=\dim\Der(\cC)=\dim\Der(\frg)$ by Lemma \ref{le:inner}. Therefore, $\AAut(\frg)$ is smooth.

Since the differential map $\textup{d}(\Ad) : \Der(\cC)\rightarrow\Der(\frg)$, $\delta\mapsto \ad_\delta$, is injective, \cite[(22.5)]{KMRT} shows that $\Ad$ is an isomorphism.
\end{proof}

Denote by $\textup{Isom}(\textup{Cayley})$, $\textup{Isom}(G_2)$, and $\textup{Isom}(\bar A_2)$, the sets of isomorphism classes of Cayley algebras, twisted forms of the Chevalley algebra of type $G_2$, and twisted forms of $\frpsl_3(\FF)$, respectively. Theorem \ref{th:char3} and \eqref{eq:AutCAutC0} immediately give the following consequence, where $[\cA]$ denotes the isomorphism class of the algebra $\cA$.

\begin{corollary}\label{co:char3}
The maps $[\cC]\mapsto[\Der(\cC)]$ and $[\cC]\mapsto [\cC^0]$ give bijections\\ $\textup{Isom}(\textup{Cayley})\rightarrow \textup{Isom}(G_2)$ and $\textup{Isom}(\textup{Cayley})\rightarrow \textup{Isom}(\bar A_2)$, respectively.
\end{corollary}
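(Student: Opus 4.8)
The plan is to reduce the statement to Galois descent together with Theorem \ref{th:char3} and \eqref{eq:AutCAutC0}. First I would recall the standard fact that, for a finite-dimensional (possibly nonassociative) algebra $\cA$ over $\FF$, the isomorphism classes of twisted forms of $\cA$ are in natural bijection with the pointed cohomology set $H^1(\FF,\AAut(\cA))$, the distinguished class corresponding to $\cA$ itself (see, e.g., \cite[Chapter VII]{KMRT}). Here all the relevant automorphism group schemes are smooth: $\AAut(\cC_s)$ is smooth by \cite[Proposition 2.2.3]{SV00} (as used in the proof of Theorem \ref{th:char3}), and $\AAut(\Der(\cC_s))$ and $\AAut(\cC_s^0)$ are isomorphic to it by Theorem \ref{th:char3} (applied to $\cC=\cC_s$) and \eqref{eq:AutCAutC0} respectively; so $H^1$ is ordinary Galois cohomology and every twisted form splits over $\FF_{\text{sep}}$. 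Now, Cayley algebras over $\FF$ are exactly the twisted forms of the split Cayley algebra $\cC_s$; $\Der(\cC_s)$ is the Chevalley algebra of type $G_2$; and $\cC_s^0\cong\frpsl_3(\FF)$ is the split short-root ideal of $\Der(\cC_s)$. Hence $\textup{Isom}(\textup{Cayley})$, $\textup{Isom}(G_2)$ and $\textup{Isom}(\bar A_2)$ are identified with $H^1(\FF,\AAut(\cC_s))$, $H^1(\FF,\AAut(\Der(\cC_s)))$ and $H^1(\FF,\AAut(\cC_s^0))$, respectively.

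Next, Theorem \ref{th:char3} applied to $\cC=\cC_s$ gives an isomorphism of affine group schemes $\Ad\colon\AAut(\cC_s)\xrightarrow{\sim}\AAut(\Der(\cC_s))$, and \eqref{eq:AutCAutC0} gives an isomorphism $\AAut(\cC_s)\xrightarrow{\sim}\AAut(\cC_s^0)$, $f\mapsto f|_{\cC_s^0}$. Each of these induces a bijection between the corresponding $H^1$ sets, that is, a bijection $\textup{Isom}(\textup{Cayley})\to\textup{Isom}(G_2)$ and a bijection $\textup{Isom}(\textup{Cayley})\to\textup{Isom}(\bar A_2)$. The one point that really needs checking is that these cohomological bijections coincide with the maps $[\cC]\mapsto[\Der(\cC)]$ and $[\cC]\mapsto[\cC^0]$ of the statement. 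I would verify this by cocycle-chasing: if $\cC$ is obtained from $\cC_s$ by twisting along a cocycle $(a_\sigma)\in Z^1(\mathrm{Gal}(\FF_{\text{sep}}/\FF),\Aut(\cC_s\otimes_\FF\FF_{\text{sep}}))$, then, because forming $\Der(-)$ and $(-)^0$ commutes with separable scalar extension and is $\mathrm{Gal}(\FF_{\text{sep}}/\FF)$-equivariant, an element of $\Der(\cC_s)\otimes_\FF\FF_{\text{sep}}=\Der(\cC_s\otimes_\FF\FF_{\text{sep}})$ lies in $\Der(\cC)$ exactly when it is fixed for the Galois action twisted by $\Ad\circ(a_\sigma)$; thus $\Der(\cC)$ is canonically the twist of $\Der(\cC_s)$ along $\Ad\circ(a_\sigma)$, and likewise $\cC^0$ is the twist of $\cC_s^0$ along $(a_\sigma|_{\cC_s^0\otimes_\FF\FF_{\text{sep}}})$. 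Since the maps induced on $H^1$ are by construction $[(a_\sigma)]\mapsto[\Ad\circ(a_\sigma)]$ and $[(a_\sigma)]\mapsto[(a_\sigma|_{\cdots})]$, the two maps in the corollary are exactly these bijections.

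I expect this cocycle compatibility to be the only genuinely substantive point; everything else is either a citation or a formal consequence of Theorem \ref{th:char3}. (That the twisted forms of $\frpsl_3(\FF)$ are exhausted by the algebras $\cC^0$ was in any case already recorded in Section \ref{se:char3}.) An essentially equivalent but more hands-on alternative, if one prefers to avoid explicit cocycles, is to prove injectivity and surjectivity of $[\cC]\mapsto[\Der(\cC)]$ directly: surjectivity by twisting $\cC_s$ along the image under $\Ad^{-1}$ of a cocycle splitting a given twisted form of $G_2$, and injectivity by transporting an isomorphism $\Der(\cC_1)\xrightarrow{\sim}\Der(\cC_2)$ through the group-scheme isomorphism $\Ad$ over $\FF_{\text{sep}}$ and descending; and symmetrically for $(-)^0$. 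In either approach the heart of the matter is the compatibility of the constructions $\Der(-)$ and $(-)^0$ with base change.
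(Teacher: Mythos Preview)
Your proposal is correct and is exactly the argument the paper has in mind: the authors simply write that the corollary follows ``immediately'' from Theorem~\ref{th:char3} and \eqref{eq:AutCAutC0}, which is precisely the $H^1$/twisted-forms argument you spell out. Your explicit cocycle check that the induced bijection on $H^1$ coincides with $[\cC]\mapsto[\Der(\cC)]$ (resp.\ $[\cC]\mapsto[\cC^0]$) is the standard verification the paper leaves implicit.
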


\medskip


\section{Characteristic $2$}\label{se:char2}

In this section, assume that the characteristic of the ground field $\FF$ is $2$. In \cite[Corollary 4.32]{EKmon} it is proved that the Chevalley algebra of type $G_2$ (i.e., the Lie algebra $\Der(\cC_s)$), is isomorphic to the projective special linear Lie algebra $\frpsl_4(\FF)$. Here we extend this result for the Lie algebra of derivations of any Cayley algebra over $\FF$.

Let $V$ be a finite-dimensional vector space over $\FF$, and let $b$ be a nondegenerate alternating (i.e., $b(u,u)=0$ for any $u \in V$) bilinear form of $V$. Denote by $\frsp(V,b)$ the corresponding symplectic Lie algebra:
\[
\frsp(V,b)=\{ f\in\frgl(V): b(f(u),v)+b(u,f(v))=0\ \forall u,v\in V\}.
\]
In particular, $\frsp_{2n}(\FF)$ denotes the symplectic Lie algebra $\frsp(\FF^{2n},b_s)$, where $b_s$ is the alternating bilinear form with coordinate matrix $\begin{pmatrix} 0&I_n\\ I_n&0\end{pmatrix}$ in the canonical basis. (Notice that, as $\text{char}(\FF) = 2$, there is no need of minus signs.) We identify the elements of $\frsp_{2n}(\FF)$ with their coordinate matrices in the canonical basis.

A matrix in $M_n(\FF)$ is called \emph{alternating} if it has the form $a+a^t$ for some $a\in M_n(\FF)$, where $a^t$ denotes the transpose of $a$. These are the coordinate matrices of the alternating bilinear forms.

\begin{lemma}\label{le:sp62psl4}
The second derived power of the symplectic Lie algebra on a vector space of dimension $6$ is isomorphic to the projective special linear Lie algebra $\frpsl_4(\FF)$:
\[
\frsp_6(\FF)^{(2)}\cong\frpsl_4(\FF).
\]
\end{lemma}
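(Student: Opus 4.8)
The plan is to exploit the exceptional isogeny between types $C_3$ and $A_3$ available in characteristic $2$, realized concretely via the exterior square. Over a field of characteristic $2$, the symplectic group $\SP_6(\FF)$ acts on $\Lambda^2(\FF^6)$, which is $15$-dimensional, and the alternating form $b_s$ determines a distinguished vector $\omega \in \Lambda^2(\FF^6)$ (the bivector dual to $b_s$) that is fixed by $\SP_6(\FF)$; moreover $\omega$ is isotropic for the induced form on $\Lambda^2(\FF^6)$ in characteristic $2$, so it lies in $\omega^\perp$. Passing to Lie algebras, one gets a chain $\frsp_6(\FF) \to \frgl(\Lambda^2(\FF^6))$ whose image stabilizes $\FF\omega$ and hence acts on the $13$-dimensional quotient, but it is cleaner to go in the other direction: start from $\frsl_4(\FF)$ acting on $W := \FF^4$, and on $\Lambda^2 W \cong \FF^6$. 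This $6$-dimensional space carries a canonical symmetric bilinear form (wedge into $\Lambda^4 W \cong \FF$), which in characteristic $2$ is alternating. The resulting map $\frsl_4(\FF) \to \frsp(\Lambda^2 W, b)$ has kernel the scalars $\FF I_4 \cap \frsl_4(\FF)$, which in characteristic $2$ is the whole center (since $4 \equiv 0$), giving an injection $\frpsl_4(\FF) \hookrightarrow \frsp_6(\FF)$.

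**Key steps, in order.** First, I would set up the exterior-square representation explicitly: fix a basis $\{w_1,w_2,w_3,w_4\}$ of $W$, let $\{w_i \wedge w_j : i<j\}$ be the induced basis of $\Lambda^2 W$, and compute the coordinate matrix of the bilinear form $b(\xi,\eta)\,w_1\wedge w_2\wedge w_3\wedge w_4 := \xi \wedge \eta$; one checks it is nondegenerate and alternating, hence (after a change of basis) equivalent to $b_s$. Second, I would verify that for $a \in \frsl_4(\FF)$ the operator $\Lambda^2 a$ (acting as $a\otimes 1 + 1\otimes a$ on decomposable elements) lies in $\frsp(\Lambda^2 W, b)$: this is the Leibniz-type identity $b((\Lambda^2 a)\xi,\eta) + b(\xi,(\Lambda^2 a)\eta) = \tr(a)\, b(\xi,\eta) = 0$. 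Third, I would identify the kernel of $a \mapsto \Lambda^2 a$ as $\{a : a\otimes 1 + 1 \otimes a = 0 \text{ on } \Lambda^2 W\}$, which forces $a$ scalar, and intersecting with $\frsl_4(\FF)$ gives $\FF I_4$ in characteristic $2$; thus $\frpsl_4(\FF)$ embeds in $\frsp_6(\FF)$. Fourth — the dimension count — $\dim \frpsl_4(\FF) = 15 - 1 = 14$ and $\dim \frsp_6(\FF) = 21$, so the image is a proper subalgebra and I must instead land it in $\frsp_6(\FF)^{(2)}$: here I would observe that $\frsp_6(\FF)$ in characteristic $2$ is not perfect (the alternating diagonal matrices form a nontrivial abelian quotient of $\frsp_6/[\frsp_6,\frsp_6]$, or more precisely the derived series stabilizes at $\frsp_6(\FF)^{(2)}$ of dimension $14$), and $\frpsl_4(\FF)$, being simple (it is the Chevalley algebra data for $A_3$ in char $2$ modulo center, and is known to be simple of dimension $14$), must therefore be contained in — and by equality of dimensions equal to — $\frsp_6(\FF)^{(2)}$. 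Concretely: the image of $\frsl_4(\FF)$ is perfect (as $\frsl_4$ is perfect: $[\frsl_4,\frsl_4] = \frsl_4$ for $n \geq 2$ in any characteristic except the $\frsl_2$ char-$2$ anomaly, and $4 \geq 3$ so this is fine), hence lands in $\frsp_6(\FF)^{(\infty)} = \frsp_6(\FF)^{(2)}$; comparing dimensions $14 = 14$ forces the embedding $\frpsl_4(\FF) \cong \frsp_6(\FF)^{(2)}$.

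**Main obstacle.** The genuinely delicate point is the computation of the derived powers of $\frsp_6(\FF)$ in characteristic $2$ — establishing that $\frsp_6(\FF)^{(1)}$ has codimension $3$ (the three "diagonal" alternating directions $E_{ii} + E_{i+3,i+3}$ survive in the abelianization) and that $\frsp_6(\FF)^{(2)}$ then has dimension exactly $14$ and is perfect. This requires working with the explicit root-space decomposition of $\frsp_6$ relative to the standard torus and tracking which toral elements are hit by brackets $[\frsp_6,\frsp_6]$ versus $[\frsp_6^{(1)},\frsp_6^{(1)}]$; the short-root/long-root distinction and the fact that $\langle \alpha^\vee, \text{long root}\rangle$ can vanish mod $2$ is what produces the extra quotient. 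Once that dimension bookkeeping is pinned down, matching it against $\dim\frpsl_4(\FF) = 14$ (which itself needs the remark that in characteristic $2$ the center of $\frsl_4$ is one-dimensional, spanned by $I_4$, since $\tr(I_4) = 4 = 0$) closes the argument with no further computation. Everything else is the routine multilinear algebra of the exterior square, which I would not grind through in detail.
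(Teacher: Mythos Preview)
Your approach is essentially the paper's: compute $\dim\frsp_6(\FF)^{(2)}=14$ directly, build the map $\frsl_4(\FF)\to\frsp(\Lambda^2 W,b)\cong\frsp_6(\FF)$ via the exterior square with kernel $\FF I_4$, and conclude by dimension count that the (perfect) image $\frpsl_4(\FF)$ fills $\frsp_6(\FF)^{(2)}$. One correction to your sketch of the derived-series step: writing elements as $\left(\begin{smallmatrix}a&b\\c&a^t\end{smallmatrix}\right)$ with $b,c$ symmetric, the first derived $\frsp_6(\FF)^{(1)}$ consists of those with $b,c$ alternating (zero diagonal), hence has codimension~$6$, not~$3$ --- the abelianization records the diagonals of $b$ and $c$, not the toral elements $E_{ii}+E_{i+3,i+3}$, which already lie in $[\frsp_6,\frsp_6]$; passing to $\frsp_6(\FF)^{(2)}$ then imposes $\tr(a)=0$, giving codimension~$7$ and the needed dimension~$14$.
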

\begin{proof}
For any natural number $n$ we have
\[
\frsp_{2n}(\FF)=\left\{\begin{pmatrix} a&b\\ c&a^t\end{pmatrix} :
a,b,c\in M_n(\FF),\ b^t=b,\, c^t=c\right\}.
\]
A direct computation gives
\[
\begin{split}
\frsp_{2n}(\FF)^{(1)}&=\left\{\begin{pmatrix} a&b\\ c&a^t\end{pmatrix} :
a,b,c\in M_n(\FF),\ \text{$b$ and $c$ alternating}\right\},\\
\frsp_{2n}(\FF)^{(2)}&=\left\{\begin{pmatrix} a&b\\ c&a^t\end{pmatrix} :
a,b,c\in M_n(\FF),\ a\in\frsl_n(\FF),\ \text{$b$ and $c$ alternating}\right\}.
\end{split}
\]
The dimension of $\frsp_{2n}(\FF)^{(2)}$ is then $n^2-1+2\binom{n}{2}=2n^2-n-1$.

Let $V$ be a four-dimensional vector space, and consider the second exterior power $\bigwedge^2V$ as a module for $\frsl(V)$. Fix a nonzero linear isomorphism $\det:\bigwedge^4 V\rightarrow \FF$ and define the nondegenerate alternating bilinear form
\[
\begin{split}
b:\textstyle{\bigwedge^2V\times\bigwedge^2V}&\longrightarrow \FF\\
 (u_1\wedge u_2,v_1\wedge v_2)&\mapsto \det(u_1\wedge u_2\wedge v_1\wedge v_2).
\end{split}
\]
Then, the action of $\frsl(V)$ on $\bigwedge^2V$ gives a Lie algebra homomorphism
\[
\Phi:\frsl(V)\rightarrow \frsp(\textstyle{\bigwedge^2V},b)\cong\frsp_6(\FF),
\]
with kernel $\FF I_V$ (where $I_V$ denotes the identity map on $V$), so $\Phi$ induces an injection $\frpsl(V)\hookrightarrow \frsp_(\bigwedge^2V,b)$. But $\frpsl(V)$ is simple of dimension $14$, so, in particular, $\frpsl(V)^{(2)}=\frpsl(V)$, and the dimension of $\frsp(\bigwedge^2V,b)^{(2)}\cong\frsp_6(\FF)^{(2)}$ is $2\times 3^2-3-1=14$. Therefore, $\Phi$ induces an isomorphism $\frpsl(V)\cong \frsp(\bigwedge^2V,b)^{(2)}$, as required.
\end{proof}

We will need some extra notation. As above, let $(V,b)$ be a finite-dimensional vector space endowed with a nondegenerate alternating bilinear form. Denote by $\frgsp(V,b)$ the Lie algebra of the group of similarities (i.e., the general symplectic Lie algebra):
\begin{multline*}
\frgsp(V,b)=\left\{ f\in\frgl(V): \exists\lambda\in\FF\text{\ such that\ }\right. \\
\left. b(f(u),v)+b(u,f(v))=\lambda b(u,v)\ \forall u,v\in V\right\},
\end{multline*}
and by $\frpgsp(V,b)$ the projective general symplectic Lie algebra (i.e., the quotient of $\frgsp(V,b)$ by the one-dimensional ideal generated by $I_V$). In particular, after choosing a basis, we get the Lie algebras $\frgsp_{2n}(\FF)$ and $\frpgsp_{2n}(\FF)$.

\begin{corollary}\label{co:sp6psl4}
The Lie algebra of derivations of $\frpsl_4(\FF)$ is isomorphic to the projective general symplectic Lie algebra $\frpgsp_6(\FF)$:
\[
\Der\bigl(\frpsl_4(\FF)\bigr)\cong \frpgsp_6(\FF).
\]
\end{corollary}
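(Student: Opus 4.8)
The plan is to identify $\Der\bigl(\frpsl_4(\FF)\bigr)$ by passing through the symplectic realization from Lemma~\ref{le:sp62psl4}. Write $L=\frpsl_4(\FF)\cong\frsp_6(\FF)^{(2)}$, and set $W=\bigwedge^2 V$ with $V$ four-dimensional, so that $L$ acts faithfully and irreducibly on $W$ preserving the alternating form $b$, and $L=\frsp(W,b)^{(2)}$ sits inside $\frgsp(W,b)$. First I would show every derivation of $L$ extends to (equivalently, is realized by) an element normalizing $L$ inside $\frgl(W)$: since $W$ is (up to the twist by $\det$) the natural $6$-dimensional module, and $L$ acts absolutely irreducibly on it, any derivation $d$ of $L$ makes $W$ into an $L$-module in a second way via $\rho\circ(\id+\epsilon d)$, and comparing with the original action using irreducibility plus the fact that $\frgl(W)$ decomposes as $\FF I_W\oplus\frsp(W,b)\oplus(\text{the }\lambda\text{-part})$ should pin $d$ down to an element of the normalizer $N_{\frgl(W)}(L)$ acting by brackets. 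The cleanest route: show $N_{\frgl(W)}(L)=\frgsp(W,b)$, because any $f\in\frgl(W)$ with $[f,L]\subseteq L$ must preserve (up to scalar) the unique, up to scalar, $L$-invariant alternating form on $W$ — uniqueness of the invariant form being where absolute irreducibility of $W$ and $\dim\Hom_L(W,W^*)=1$ enter.

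Granting $N_{\frgl(W)}(L)=\frgsp(W,b)$, the induced map $\frgsp(W,b)\to\Der(L)$, $f\mapsto\ad_f|_L$, has kernel exactly the centralizer of $L$ in $\frgsp(W,b)$, which is $\FF I_W$ by irreducibility of $W$; so it factors through an injection $\frpgsp(W,b)\hookrightarrow\Der(L)$. For surjectivity I would argue by dimension. We have $\dim\frgsp_{2n}(\FF)=\dim\frsp_{2n}(\FF)+1=2n^2+n+1$ for $n=3$, giving $\dim\frgsp_6(\FF)=22$ and $\dim\frpgsp_6(\FF)=21$. On the other side, $\Der(L)$ contains $\ad(L)$ (as $L$ is centerless, being $\frpsl_4$ in characteristic~$2$ — one must check the center of $\frsp_6^{(2)}$ is trivial, which follows from simplicity of $\frpsl_4$), and the outer derivations are controlled by $H^1(L,L)$; the expected count is $\dim\Der(L)=\dim\ad(L)+\dim(\text{outer})=14+7=21$, matching $\dim\frpgsp_6$. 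The value $7$ for the outer part is forced once we know $\frpgsp_6(\FF)\hookrightarrow\Der(L)$ and $\dim\frpgsp_6-\dim\frpsl_4=21-14=7$, so actually the injection together with the upper bound $\dim\Der(L)\le 21$ suffices; alternatively invoke that the first cohomology of the natural module situation in characteristic~$2$ contributes precisely the graph automorphism direction.

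The main obstacle I expect is pinning down $\dim\Der\bigl(\frpsl_4(\FF)\bigr)$ — equivalently, ruling out extra outer derivations beyond those coming from $\frgsp_6$. In characteristic~$2$, $\frpsl_4(\FF)$ is a slightly degenerate simple Lie algebra and standard "derivations are inner" theorems do not apply verbatim, so I would handle this by the concrete normalizer computation above rather than by citing a general result: show directly that an arbitrary $d\in\Der(L)$ induces, via its action on the generators of $L$ acting on $W$, an element of $\frgsp(W,b)$ modulo inner ones. A convenient technical reduction is to base-change to $\FF_{\text{alg}}$ (derivations and the dimension count are unaffected), so that $L$ splits and one can use an explicit Chevalley-type basis of $\frsp_6^{(2)}$ and its action on $W$ to verify the normalizer claim by a finite calculation. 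Once $\frpgsp_6(\FF)\hookrightarrow\Der(L)$ is established and equality of dimensions is in hand, the isomorphism $\Der\bigl(\frpsl_4(\FF)\bigr)\cong\frpgsp_6(\FF)$ follows.
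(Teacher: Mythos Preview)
Your injection $\frpgsp_6(\FF)\hookrightarrow\Der(L)$ is set up the same way as in the paper: one simply observes that $\frgsp_6(\FF)$ normalizes its own derived term $\frsp_6(\FF)^{(2)}\cong\frpsl_4(\FF)$, so $\ad$ restricts, and the kernel is the centralizer $\FF I_6$. Your additional computation of the full normalizer $N_{\frgl(W)}(L)=\frgsp(W,b)$ via uniqueness of the $L$-invariant form is correct and pleasant, but it is not needed for the injection, and it does not by itself give surjectivity.

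The gap is in your surjectivity argument. You propose to show that every $d\in\Der(L)$ is realized by some $f\in\frgl(W)$ by deforming $\rho$ to $\rho+\epsilon(\rho\circ d)$ over $\FF[\epsilon]/(\epsilon^2)$ and then invoking absolute irreducibility of $W$ to conclude the deformation is trivial. But absolute irreducibility of $W$ over $\FF$ does \emph{not} imply that infinitesimal deformations of $\rho$ are trivial: that is precisely the statement $H^1\bigl(L,\frgl(W)\bigr)=0$, which can and does fail for simple modules in positive characteristic. So ``comparing with the original action using irreducibility'' does not pin $d$ down to an element of the normalizer; you would need an independent cohomology computation, which is no easier than the dimension bound you are trying to avoid. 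Your own diagnosis is right: the whole content is the inequality $\dim\Der\bigl(\frpsl_4(\FF)\bigr)\le 21$, and none of the soft arguments you sketch actually establishes it.

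The paper handles exactly this point in two ways: it cites a direct machine computation, and it supplies an elementary hand proof by exploiting the $\ZZ^3$-grading on $\frpsl_4(\FF)$ coming from the diagonal torus, bounding each homogeneous piece $\Der(\frg)_\alpha$ separately (the $(0,0,0)$-piece by at most $3$, six further degrees by at most $1$ each, and showing the rest lie in $\ad\frg$), which yields $\dim\Der(\frg)\le\dim\frg+7=21$. Your final fallback (``base-change to $\FF_{\text{alg}}$ and use a Chevalley-type basis for a finite calculation'') is in spirit the same as this graded computation; if you carry it out, you recover the paper's proof.
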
\label{co:pgsp6_Derpsl4}
\begin{proof}
Note that we have the decomposition
\[
\frgsp_6(\FF)=\frsp_6(\FF)\oplus \FF\begin{pmatrix} I_3&0\\ 0&0\end{pmatrix},
\]
and one may easily check that $\frgsp_6(\FF)^{(1)}=\frsp_6(\FF)$.

For any $A\in\frgsp_6(\FF)$, $\ad_A:B\mapsto [A,B]$ leaves invariant $\frgsp_6(\FF)^{(3)}=\frsp_6(\FF)^{(2)}$, which is isomorphic to $\frpsl_4(\FF)$ by Lemma \ref{le:sp62psl4}, so we obtain a Lie algebra homomorphism
\[
\begin{split}
\Phi:\frgsp_6(\FF)&\longrightarrow \Der\bigl(\frsp_6(\FF)^{(2)}\bigr)\, \Bigl(\cong\Der\bigl(\frpsl_4(\FF)\bigr)\,\Bigr),\\
A\ &\mapsto \ \ad_A\vert_{\frsp_6(\FF)^{(2)}}.
\end{split}
\]
The kernel of $\Phi$ is the centralizer in $\frgsp_6(\FF)$ of $\frsp_6(\FF)^{(2)}$, which is $\FF I_6$, so $\Phi$ induces an injection $\frpgsp_6(\FF)\rightarrow \Der\bigl(\frpsl_4(\FF)\bigr)$.  The dimension of $\frpgsp_6(\FF)$ is $21$, and (as it may be calculated in GAP as in \cite{Candido_et_al}) this is also the dimension of $\Der\bigl(\frpsl_4(\FF)\bigr)$. The result follows.
\end{proof}

In fact, in order to prove the previous result, the exact computation of the dimension of $\Der\bigl(\frpsl_4(\FF)\bigr)$ is not required; we just need the bound 
\[
\dim\Der\bigl(\frpsl_4(\FF)\bigr)\leq 21.
\] 
For completeness, let us provide an elementary proof of this fact.

\begin{lemma}
$\dim\Der\bigl(\frpsl_4(\FF)\bigr)\leq 21$.
\end{lemma}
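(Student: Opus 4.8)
The plan is to realise $\frg:=\frpsl_4(\FF)$ with an explicit grading, reduce the estimate to the homogeneous components of $\Der(\frg)$, and bound the (few) components one by one. Since $\charac\FF=2$ we have $I_4\in\frsl_4(\FF)$, so $\frg=\frsl_4(\FF)/\FF I_4$ has dimension $14$; let $E_{ij}$ ($1\le i\ne j\le 4$) denote the images of the matrix units and $\frh$ the $2$-dimensional image of the diagonal trace-zero matrices. Grade $\frg$ by putting $E_{ij}$ in degree $j-i$ and $\frh$ in degree $0$, so that $\frg=\bigoplus_{n=-3}^{3}\frg_n$ with $(\dim\frg_{-3},\dots,\dim\frg_3)=(1,2,3,2,3,2,1)$, $\frg_0=\frh$, and $\frg$ generated as a Lie algebra by $\frg_{-1}\cup\frg_1$ (these contain the images of the standard Chevalley generators of $\frsl_4(\FF)$). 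I would also record the $\frh$-weight decomposition: the nonzero $\frh$-weights occurring in $\frg$ are $\bar\alpha_1,\bar\alpha_2,\bar\alpha_3$, the restrictions to $\frh$ of $a_1+a_2,\ a_1+a_3,\ a_2+a_3$; they satisfy $\bar\alpha_1+\bar\alpha_2+\bar\alpha_3=0$, span a copy of $(\ZZ/2\ZZ)^2$ inside $\frh^*$, and each of the corresponding weight spaces of $\frg$ is $4$-dimensional.

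Since $\frg$ is graded, so is $\frgl(\frg)$, and comparison of degrees in the Leibniz identity on homogeneous elements shows that every homogeneous component of a derivation is again a derivation; hence $\Der(\frg)=\bigoplus_{k}\Der(\frg)_k$ with $\Der(\frg)_k=\{d:d(\frg_n)\subseteq\frg_{n+k}\ \forall n\}$ and $\Der(\frg)_k=0$ for $|k|>6$. Moreover, in characteristic $2$ the transpose map $X\mapsto X^t$ is an automorphism of $\frsl_4(\FF)$ (no signs intervene), inducing an automorphism of $\frg$ which is the identity on $\frh$ and sends $\frg_n$ to $\frg_{-n}$; conjugation by it identifies $\Der(\frg)_k$ with $\Der(\frg)_{-k}$. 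So the problem reduces to bounding $\dim\Der(\frg)_0+2\sum_{k=1}^{6}\dim\Der(\frg)_k$.

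The components of degree $\ge4$ are easy. If $k\ge5$, a derivation of degree $k$ kills both $\frg_{-1}$ and $\frg_1$ (because $\frg_{k+1}=\frg_{k-1}=0$) and hence kills the subalgebra they generate, i.e. all of $\frg$; thus $\Der(\frg)_k=0$. If $k=4$, then $d$ kills $\frg_0=\frh$ and $\frg_1$ (as $\frg_4=\frg_5=0$), so for $h\in\frh$ and $x\in\frg_{-1}$ we get $[h,d(x)]=d([h,x])$, which forces $d(x)$ to be an $\frh$-weight vector of the same weight as $x$; comparing the weights occurring in $\frg_{-1}$ with the one occurring in $\frg_3$ shows that $d$ vanishes on the $\bar\alpha_1$-part of $\frg_{-1}$ and is given on the remaining generator by a single scalar, which then determines $d$ on $\frg_{-2}$ and $\frg_{-3}$ by the Leibniz rule. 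Hence $\dim\Der(\frg)_{\pm4}\le1$, and $\Der(\frg)=\bigoplus_{k=-4}^{4}\Der(\frg)_k$.

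What remains is to bound $\Der(\frg)_k$ for $k=0,1,2,3$; this is the substance of the argument and the main obstacle, because for these degrees the naive weight argument is unavailable, since a derivation of degree $k\neq0$ need not annihilate $\frh$ (one has only $d(\frh)\subseteq\frg_k\neq0$). The plan is, for each such $k$, to first subtract a suitable inner derivation $\ad_y$ with $y\in\frg_k$ to normalise $d$ (for instance so that $d$ vanishes on a chosen line of $\frg_{-k}$), then use that $d$ is determined by its values on the six generators lying in $\frg_{-1}\cup\frg_1$, and that the Leibniz identity applied to the brackets among these generators — in particular to the elements $[E_{i,i+1},E_{i+1,i}]\in\frh$ and to the "Serre" brackets that vanish in $\frsl_4(\FF)$ — together with the $\frh$-weight constraints, pins down the allowed values. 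Carried out degree by degree this yields $\dim\Der(\frg)_0\le5$, $\dim\Der(\frg)_{\pm1}\le3$, $\dim\Der(\frg)_{\pm2}\le3$ and $\dim\Der(\frg)_{\pm3}\le1$; combined with $\dim\Der(\frg)_{\pm4}\le1$ this gives $\dim\Der(\frg)\le 5+2(3+3+1+1)=21$. The heaviest bookkeeping is in degree $0$, where the dimension is largest and the $\frh$-weight spaces have multiplicity greater than one; everything else is routine.
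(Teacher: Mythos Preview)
Your outline is correct and would yield the bound, but it proceeds via a genuinely coarser grading than the paper's. The paper grades $\frg=\frpsl_4(\FF)$ by the full root lattice $\ZZ^3$ (setting $\deg\bar E_{12}=(1,0,0)$, $\deg\bar E_{23}=(0,1,0)$, $\deg\bar E_{34}=(0,0,1)$), so that every nonzero-degree component of $\frg$ is one-dimensional. With that in hand the paper shows in three short steps: (i) $\dim\Der(\frg)_{(0,0,0)}\le 3$ and every such $d$ kills $\frh$; (ii) every derivation is an inner derivation plus one annihilating $\frh$; (iii) a nonzero homogeneous derivation of degree $\alpha\neq(0,0,0)$ with $d(\frh)=0$ can occur only for $\alpha$ in a six-element set $X=\{\pm(1,0,1),\pm(1,0,-1),\pm(1,2,1)\}$, and there $\dim\Der(\frg)_\alpha\le 1$. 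Summing gives $14+1+6=21$.

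Your $\ZZ$-grading is precisely the image of the paper's under $(a,b,c)\mapsto a+b+c$, so your components are direct sums of theirs: the paper's extra derivations in degrees $(0,0,0)$ and $\pm(1,0,-1)$ all land in your degree $0$, those in $\pm(1,0,1)$ land in your degrees $\pm 2$, and those in $\pm(1,2,1)$ land in your degrees $\pm 4$. Thus your asserted bounds $5,3,3,1,1$ for $|k|=0,1,2,3,4$ are exactly the true dimensions, with no slack whatsoever. Your use of the transpose automorphism (a genuine automorphism in characteristic $2$) to identify $\Der(\frg)_k\cong\Der(\frg)_{-k}$ is a nice touch the paper does not exploit, and your treatment of $|k|\ge 4$ is clean. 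But the cases $k=0,1,2,3$ that you defer are the entire substance of the argument, and in your coarser grading they are markedly heavier than in the paper's: in degree $0$, for instance, you must cut the $18$ parameters of $d\vert_{\frg_1\oplus\frg_{-1}}$ down to exactly $5$ using the Serre-type relations and the consistency on $\frh$, with no room for a single loose estimate. The paper's finer grading is what buys it the short, essentially mechanical, case analysis; your approach trades that for the transpose symmetry, which is elegant but does not fully compensate for the multiplicity in the weight spaces.
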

\begin{proof}
For $1\leq i,j\leq 4$, let $E_{ij}$ be the matrix in $\frgl_4(\FF)$ with $1$ in the $(i,j)$ entry and $0$'s elsewhere. Denote by $\bar A$ the class of a matrix $A\in\frsl_4(\FF)$ in $\frpsl_4(\FF)$.

Then, $\frsl_4(\FF)$ is graded by $\ZZ^3$, with
\begin{gather*}
\degree(E_{12})=(1,0,0)=-\degree(E_{21}),\\
\degree(E_{23})=(0,1,0)=-\degree(E_{32}),\\
\degree(E_{34})=(0,0,1)=-\degree(E_{43}).
\end{gather*}
As the identity matrix $I_4$ is homogeneous of degree $(0,0,0)$, this induces a grading by $\ZZ^3$ on $\frg=\frpsl_4(\FF)$:
\[
\Gamma:\frg=\bigoplus_{\alpha\in\ZZ^3}\frg_\alpha,
\]
with support 
\[
\begin{split}
\supp\Gamma&\bydef \{\alpha\in\ZZ^3: \frg_\alpha\ne 0\}\\
 &=\{(0,0,0),\pm(1,0,0),\pm(0,1,0),\pm(0,0,1),\pm(1,1,0),\pm(0,1,1),\pm(1,1,1)\}.
\end{split}
\]
Let $\frh=\espan{H_1\bydef\overline{E_{11}+E_{22}},H_2\bydef\overline{E_{22}+E_{33}}}=\frg_{(0,0,0)}$ be the `diagonal' subalgebra of $\frg$. Then, the decomposition in eigenspaces for the adjoint action of $\frh$ is $\frg=\frh\oplus\frg_1\oplus\frg_2\oplus\frg_3$, with
\[
\begin{split}
\frg_1&=\espan{\bar E_{12},\bar E_{21},\bar E_{34},\bar E_{43}}
 =\frg_{(1,0,0)}\oplus\frg_{(-1,0,0)}\oplus\frg_{(0,0,1)}\oplus\frg_{(0,0,-1)},\\
\frg_2&=\espan{\bar E_{23},\bar E_{32},\bar E_{14},\bar E_{41}}
 =\frg_{(0,1,0)}\oplus\frg_{(0,-1,0)}\oplus\frg_{(1,1,1)}\oplus\frg_{(-1,-1,-1)},\\
\frg_3&=\espan{\bar E_{13},\bar E_{31},\bar E_{24},\bar E_{42}}
 =\frg_{(1,1,0)}\oplus\frg_{(-1,-1,0)}\oplus\frg_{(0,1,1)}\oplus\frg_{(0,-1,-1)}.
\end{split}
\]
As $\frg$ is $\ZZ^3$-graded, so is $\Der(\frg)$. Several steps are required now:

\begin{romanenumerate}
\item $\dim\Der(\frg)_{(0,0,0)}\leq 3$, and $d(\frh)=0$ for any $d\in \Der(\frg)_{(0,0,0)}$.

\begin{proof}
Any $d\in \Der(\frg)_{(0,0,0)}$ preserves the one-dimensional spaces $\frg_\alpha$, for $\alpha\in\supp\Gamma\setminus\{(0,0,0)\}$. Then $d$ and $\ad_\frh$ commute, so $d(\frh)=0$. Also $d(\bar E_{12})=\lambda \bar E_{12}$ and $d(\bar E_{23})=\mu \bar E_{23}$ for some $\lambda,\mu\in\FF$. From $d(\frh)=0$, we obtain that $d(\bar E_{21})=-\lambda \bar E_{21}$ and $d(\bar E_{32})=-\mu\bar E_{32}$. Hence $\tilde d\bydef d-\ad_{\mu H_2+\lambda H_1}$ annihilates $\bar E_{12}$ and $\bar E_{23}$ (and $\bar E_{21}$ and $\bar E_{32}$). Since the elements $\bar E_{12}$, $\bar E_{21}$, $\bar E_{23}$, $\bar E_{32}$, $\bar E_{34}$ and $\bar E_{43}$ generate $\frg$, it follows that $\tilde d$ is determined by the value $\tilde d(E_{34})$. We conclude that $\dim\Der(\frg)_{(0,0,0)}-\dim\ad_\frh\leq 1$.
\end{proof}

\item $\Der(\frg)=\ad_\frg +\{d\in\Der(\frg): d(\frh)=0\}$.

\begin{proof}
We already have $\Der(\frg)_{(0,0,0)}\subseteq \{d\in\Der(\frg):d(\frh)=0\}$, and it is clear that $d\in\Der(\frg)_\alpha$, with $\alpha \in \ZZ^3 \setminus \supp\Gamma$, implies $d(\frh)\subseteq \frg_\alpha=0$. On the other hand, if $d\in \Der(\frg)_\alpha$, with $\alpha \in\supp\Gamma\setminus \{(0,0,0)\}$, then the restriction of $d$ to $\frh$ defines a linear map $\beta :\frh\rightarrow \FF$ by $d(H)=\beta(H)\bar E_{rs}$, where $\bar E_{rs}$ is the basic element in the one-dimensional space $\frg_\alpha$. But for any $H,H'\in\frh$ we get:
\[
\begin{split}
0=d([H,H'])&=[d(H),H']+[H,d(H')]\\
 &= \beta(H)[\bar E_{rs},H']+ \beta(H')[H,\bar E_{rs}]\\
 &=\bigl(-\beta(H)\gamma(H')+\beta(H')\gamma(H)\bigr)\bar E_{rs},
\end{split}
\]
where $\gamma:\frh\rightarrow \FF$ is the nonzero linear form such that $[H,\bar E_{rs}]=\gamma(H)\bar E_{rs}$ for any $H\in\frh$. Hence $\beta$ is a scalar multiple of $\gamma$, and if $\beta=\nu\gamma$ with $\nu\in \FF$, then $(d-\nu\ad_{\bar E_{rs}})(\frh)=0$. (This argument is similar to the one in \cite[Proposition 8.1]{EK12}.)
\end{proof}

\item Now note that $\bar E_{12}$, $\bar E_{23}$, $\bar E_{34}$ and $\bar E_{41}$ generate $\frg$. Let $0\ne d\in \Der(\frg)_\alpha$, with $\alpha \in \mathbb{Z}_3 \setminus \{(0,0,0)\}$, and $d(\frh)=0$. Then $d(\frg_i)\subseteq \frg_i$, for $i=1,2,3$, and
\[
\begin{split}
&d(\bar E_{12})\in\frg_1\ 
\text{so either $d(\bar E_{12})=0$ or $\alpha\in\{(-2,0,0),(-1,0,1),(-1,0,-1)\}$,}\\
&d(\bar E_{23})\in\frg_2\ 
\text{so either $d(\bar E_{23})=0$ or $\alpha \in\{(0,-2,0),(1,0,1),(-1,-2,-1)\}$,}\\
&d(\bar E_{34})\in\frg_1\ 
\text{so either $d(\bar E_{34})=0$ or $\alpha\in\{(0,0,-2),(1,0,-1),(-1,0,1)\}$,}\\
&d(\bar E_{41})\in\frg_2\ 
\text{so either $d(\bar E_{41})=0$ or $\alpha\in\{(2,2,2),(1,2,1),(1,0,-1)\}$.}
\end{split}
\]
But $\Der(\frg)_\alpha=0$ for $\alpha\in\{(-2,0,0),(0,-2,0),(0,0,-2),(2,2,2)\}$, because any $d$ in one of these homogeneous spaces annihilates the generators $\bar E_{21}$, $\bar E_{32}$, $\bar E_{43}$ and $\bar E_{14}$. Hence, our homogeneous $d$ belongs to $\Der(\frg)_\alpha$ with $\alpha \in X\bydef\{\pm(1,0,1),\pm(1,0,-1),\pm(1,2,1)\}$. Now if, for instance, $d\in\Der(\frg)_{(-1,0,-1)}$, then $d$ annihilates $\bar E_{23}\in \frg_{(0,1,0)}$ (because $(0,1,0)+(-1,0,-1)\not\in\supp\Gamma$),  $\bar E_{41}\in\frg_{(-1,-1,-1)}$, and $\bar E_{24}\in\frg_{(1,1,0)}$. Moreover, since $\bar E_{34}=[[\bar E_{31},\bar E_{12}],\bar E_{24}]$, it follows that $d$ is determined by the value $d(\bar E_{12})$. Thus, we get $\dim\Der(\frg)_{(-1,0,-1)}\leq 1$. A similar argument applies to the other possibilities, so $\dim\Der(\frg)_\alpha\leq 1$ for any $\alpha\in X$.

\item Therefore, we conclude that
\[
\dim\Der(\frg)-\dim\frg=\Bigl(\dim\Der(\frg)_{(0,0,0)}-\dim\frh\Bigr)+\hspace{-5pt}\sum_{\alpha \in X}\hspace{-5pt}\dim\Der(\frg)_\alpha \leq 7,
\]
so that $\dim\Der(\frg)\leq \dim\frg+7=21$. \qedhere
\end{romanenumerate}
\end{proof}

\smallskip

Our next result extends \cite[Corollary 4.32]{EKmon}.

\begin{theorem}\label{th:DerC_psl4}
Let $\cC$ be a Cayley algebra over a field $\FF$ of characteristic $2$. The Lie algebra of derivations $\Der(\cC)$ is isomorphic to the projective special linear Lie algebra $\frpsl_4(\FF)$. (Independently of the isomorphism class of $\cC$!)
\end{theorem}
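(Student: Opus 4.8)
The plan is to realise $\Der(\cC)$ concretely as a Lie algebra of symplectic transformations of a fixed $6$-dimensional space, exploiting that in characteristic $2$ all nondegenerate alternating bilinear forms of a given even dimension are equivalent, so that the target will not depend on $\cC$. Every $d\in\Der(\cC)$ satisfies $d(1)=0$ and, by \eqref{eq:CayleyHamilton}, preserves the norm $q$ and hence its polar form $b_q$ (i.e.\ $b_q(d(x),y)+b_q(x,d(y))=0$), which is alternating since $\charac\FF=2$; thus $d$ maps $\cC$ into $1^\perp=\cC^0$, the space of trace-zero elements. As $b_q(1,1)=0$ we have $1\in\cC^0$, and the radical of $b_q\vert_{\cC^0}$ equals $\cC^0\cap(\cC^0)^\perp=\FF 1$, so $b_q$ induces a nondegenerate alternating form $\bar b_q$ on the $6$-dimensional space $W:=\cC^0/\FF 1$. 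Since each $d$ kills $1$, stabilises $\cC^0$ and is skew-adjoint, it induces $\bar d\in\frsp(W,\bar b_q)$, giving a Lie algebra homomorphism $\Psi:\Der(\cC)\to\frsp(W,\bar b_q)$, $d\mapsto\bar d$.

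Next I would show that $\Psi$ is injective. Its kernel is an ideal of $\Der(\cC)$, which is a twisted form of $\Der(\cC_s)\cong\frpsl_4(\FF)$ (\cite[Corollary 4.32]{EKmon}); as $\frpsl_4(\FF)$ is simple in characteristic $2$, so is $\Der(\cC)$, and since $\Psi\neq 0$ we conclude $\ker\Psi=0$. One can also see this by a direct computation avoiding simplicity: if $\bar d=0$ then $d(\cC^0)\subseteq\FF 1$, and, choosing $x$ with $t(x)=1$ so that $\cC=\FF x\oplus\cC^0$, the identity $ab+ba=t(a)b+t(b)a+b_q(a,b)1$ together with $d(x^2)=d(x)x+xd(x)$ forces in turn that $d\vert_{\cC^0}$ is the map $u\mapsto b_q(d(x),u)1$, then that $d(x)\in\FF 1$, and finally that $d(x)=0$; hence $d=0$.

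It remains to identify the image of $\Psi$. The algebra $\Der(\cC)$ is perfect, being a twisted form of $\frpsl_4(\FF)=[\frpsl_4(\FF),\frpsl_4(\FF)]$; therefore $\im\Psi$ is a perfect subalgebra of $\frsp(W,\bar b_q)$, so $\im\Psi\subseteq\frsp(W,\bar b_q)^{(2)}$. Now $(W,\bar b_q)$ is a $6$-dimensional symplectic space, so $\frsp(W,\bar b_q)\cong\frsp_6(\FF)$ and, by Lemma \ref{le:sp62psl4}, $\frsp(W,\bar b_q)^{(2)}\cong\frsp_6(\FF)^{(2)}\cong\frpsl_4(\FF)$, which has dimension $14$. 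Since derivations commute with scalar extension, $\dim_\FF\Der(\cC)=\dim_{\FF_{\text{alg}}}\Der\bigl(\cC\otimes\FF_{\text{alg}}\bigr)=\dim\frpsl_4(\FF)=14$ as well. Hence the injective homomorphism $\Psi$ identifies $\Der(\cC)$ with all of $\frsp(W,\bar b_q)^{(2)}$, so $\Der(\cC)\cong\frsp_6(\FF)^{(2)}\cong\frpsl_4(\FF)$; and since $(W,\bar b_q)$ is unique up to isometry, the answer indeed does not depend on the isomorphism class of $\cC$.

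The main obstacle I expect is the injectivity of $\Psi$ when one does not want to invoke the simplicity of $\frpsl_4(\FF)$ in characteristic $2$: the direct argument requires careful bookkeeping of how a derivation collapsing $\cC^0$ into $\FF 1$ interacts with the multiplication. A secondary technical point is justifying, and using, that derivations commute with base field extension, which is what makes $\Der(\cC)$ a twisted form of $\Der(\cC_s)$ and fixes its dimension.
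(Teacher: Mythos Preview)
Your argument is essentially identical to the paper's: construct the natural map $\Der(\cC)\to\frsp(\cC^0/\FF 1,\bar b_q)\cong\frsp_6(\FF)$, use simplicity of $\Der(\cC)$ (as a twisted form of $\frpsl_4(\FF)$) for injectivity, use perfectness to force the image into $\frsp_6(\FF)^{(2)}$, and finish by the dimension count together with Lemma~\ref{le:sp62psl4}. Your alternative direct computation for injectivity and your explicit remark that all $6$-dimensional symplectic spaces are isometric are pleasant additions, but do not change the structure of the proof.
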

\begin{proof}
Recall that $\Der(\cC)$ is a $14$-dimensional simple Lie algebra, a twisted form of $\Der(\cC_s)$, which is the Chevalley algebra of type $G_2$.

Any $d\in\Der(\cC)$ leaves the norm $q$ invariant, annihilates the unity $1$ and preserves $\cC^0$, the subspace of trace zero elements. Since the characteristic is $2$, the unity $1$ is in $\cC^0$, so $d$ induces an element $\tilde d$ in the symplectic Lie algebra $\frsp\bigl(\cC^0/\FF 1,\tilde b_q\bigr)\cong\frsp_6(\FF)$, where $\tilde b_q$ is the nondegenerate alternating bilinear form on $\cC^0$ induced by $b_q$. (Note that $\cC^0=\{x\in\cC: b_q(x,1)=0\}$, so $\tilde b_q$ is nondegenerate.)

Therefore, we have a homomorphism of Lie algebras
\[
\begin{split}
\Phi:\Der(\cC)&\longrightarrow \frsp\bigl(\cC^0/\FF 1,\tilde b_q\bigr),\\
 d\quad &\mapsto\quad \tilde d.
\end{split}
\]
The simplicity of $\Der(\cC)$ implies that $\Phi$ is injective, and hence 
\[
\Phi\bigl(\Der(\cC)\bigr)=\Phi\bigl(\Der(\cC)^{(2)}\bigr)\subseteq 
 \frsp\bigl(\cC^0/\FF 1,\tilde b_q\bigr)^{(2)}\cong\frsp_6(\FF)^{(2)}.
\]
By dimension count, the image of $\Phi$ is $\frsp\bigl(\cC^0/\FF 1,\tilde b_q\bigr)^{(2)}$, which is isomorphic to $\frsp_6(\FF)^{(2)}$, and hence to $\frpsl_4(\FF)$ by Lemma \ref{le:sp62psl4}.
\end{proof}

\smallskip

The previous theorem shows that, in characteristic $2$, it is no longer true that two Cayley algebras are isomorphic if and only if their Lie algebras of derivations are isomorphic.

\begin{remark}
Given an irreducible root system of type $X_r$ and its corresponding Chevalley algebra $\frg$ over a field $\FF$, the quotient $\frg/Z(\frg)$ (where $Z(\frg)$ is the center of $\frg$) is usually called the \emph{classical Lie algebra of type $X_r$}. In particular, in characteristic $2$, Theorem \ref{th:DerC_psl4} implies that the classical Lie algebras of type $A_3$ and $G_2$ coincide.
\end{remark}

Write $\cA :=M_6(\FF)$, and let $\sigma$ be the symplectic involution (attached to the standard alternating form $b_s$), such that, for any $X\in \cA$, $\sigma(X)$ is the adjoint relative to $b_s$.

\begin{theorem}\label{th:AAutDerC}
The affine group scheme of automorphisms of $\Der(\cC_s)$ is isomorphic to the affine group scheme of automorphisms of the algebra with involution $(\cA,\sigma)$.
\end{theorem}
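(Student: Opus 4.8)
The plan is to mimic the proof of Theorem~\ref{th:char3}: build a natural homomorphism of affine group schemes, check that it is injective on points and on Lie algebras (hence a closed immersion), deduce by a dimension count that both group schemes are smooth of the same dimension, and conclude with \cite[(22.5)]{KMRT}.

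First I would make the identification behind the statement explicit. By the proof of Theorem~\ref{th:DerC_psl4}, the representation $\Phi\colon\Der(\cC_s)\to\frsp(\cC_s^0/\FF 1,\tilde b_q)$, $d\mapsto\tilde d$, is an isomorphism onto $\frsp(\cC_s^0/\FF 1,\tilde b_q)^{(2)}$; after fixing an isometry of the $6$-dimensional symplectic space $(\cC_s^0/\FF 1,\tilde b_q)$ with $(\FF^6,b_s)$ this becomes an isomorphism $\Der(\cC_s)\cong\frsp_6(\FF)^{(2)}$, where $\frsp_6(\FF)=\{X\in\cA:\sigma(X)=X\}$ is the set of $\sigma$-symmetric elements of $\cA=M_6(\FF)$. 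All of this is compatible with base change to any unital commutative associative $\FF$-algebra $R$ (derived powers commute with flat base change). Since any automorphism $f$ of $(\cA_R,\sigma_R)$ restricts to a Lie automorphism of the $\sigma_R$-symmetric elements and hence preserves their second derived algebra, composing with $\Phi_R$ produces a morphism of affine group schemes
\[
\Psi\colon\AAut(\cA,\sigma)\longrightarrow\AAut(\Der(\cC_s)).
\]

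By Lemma~\ref{le:sp62psl4}, $\frsp_6(\FF)^{(2)}$ is the image of $\frsl(V)$ acting on $\bigwedge^2 V\cong\FF^6$, and this module is absolutely irreducible (the weight $\omega_2$ of $A_3$ is minuscule); so, by the density theorem, $\frsp_6(\FF)^{(2)}$ generates $\cA=M_6(\FF)$ as an associative algebra. Consequently an $R$-algebra automorphism of $\cA_R$ restricting to the identity on $\frsp_6(\FF)^{(2)}$ is the identity, so $\ker\Psi$ is trivial; likewise a derivation of $\cA$ vanishing on $\frsp_6(\FF)^{(2)}$ vanishes on all of $\cA$, so $\mathrm{d}\Psi$ is injective. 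Thus $\Psi$ is a closed immersion. Moreover, by Skolem--Noether, $\AAut(\cA,\sigma)$ is the projective symplectic similitude group scheme $\mathbf{PGSp}_6$, which is smooth of dimension $21$ with Lie algebra $\frpgsp_6(\FF)$ (see, e.g., \cite{KMRT}).

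The delicate point---taking the place of ``preservation of the unique proper ideal'' in Theorem~\ref{th:char3}---is surjectivity of $\Psi$ on $\FF_{\mathrm{alg}}$-points, equivalently connectedness of $\AAut(\Der(\cC_s))$. Over $\FF_{\mathrm{alg}}$ we have $L\bydef\Der(\cC_s)\cong\frpsl_4(\FF_{\mathrm{alg}})$, and $W=\bigwedge^2 V$ is, up to isomorphism, the unique $6$-dimensional irreducible restricted $L$-module: a highest-weight count shows that the restricted irreducible $\frsl_4$-modules on which the identity matrix acts as $0$---that is, those that descend to $\frpsl_4$---have dimensions $1$, $6$, $14$ and $64$. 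As $L$ is centerless, every Lie automorphism $\varphi$ of $L_{\FF_{\mathrm{alg}}}$ is restricted, so the twisted module $W^\varphi$ is again a $6$-dimensional irreducible restricted $L$-module, hence isomorphic to $W$; this forces $\varphi=\mathrm{Int}(g)|_L$ for some $g\in\GL(W)(\FF_{\mathrm{alg}})$, and since the space of $L$-invariant bilinear forms on $W$ is one-dimensional, $g$ is a symplectic similitude, so $\varphi$ lies in the image of $\Psi$. Hence $\Psi$ is bijective on $\FF_{\mathrm{alg}}$-points; together with the closed immersion property this yields $\dim\AAut(\Der(\cC_s))=\dim\AAut(\cA,\sigma)=21$, while $\dim\Der(\Der(\cC_s))=\dim\Der(\frpsl_4(\FF))=21$ by Corollary~\ref{co:sp6psl4}, so $\AAut(\Der(\cC_s))$ is smooth. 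Then $\Psi$ is a homomorphism of smooth affine group schemes, bijective on $\FF_{\mathrm{alg}}$-points and with bijective differential, so \cite[(22.5)]{KMRT} shows it is an isomorphism. I expect this last step---the identification of the $6$-dimensional $L$-module, i.e.\ the connectedness of $\AAut(\Der(\cC_s))$---to be the main obstacle.
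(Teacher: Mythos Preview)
Your proof is correct and follows essentially the same architecture as the paper's: construct the restriction morphism $\Psi\colon\AAut(\cA,\sigma)\to\AAut(\Der(\cC_s))$, show it is a closed immersion (injectivity on points and on Lie algebras, using that $\frsp_6(\FF)^{(2)}$ generates $\cA$), and conclude from smoothness and a dimension count. The one genuine difference lies in how surjectivity on $\FF_{\mathrm{alg}}$-points (equivalently, connectedness of $\AAut(\Der(\cC_s))$) is obtained. The paper simply cites Hogeweij's result \cite{HogewejII} that $\Aut(\frpsl_4(\FF_{\mathrm{alg}}))$ is the adjoint Chevalley group of type $C_3$, whereas you supply a self-contained representation-theoretic argument: the uniqueness (up to isomorphism) of the $6$-dimensional restricted irreducible $\frpsl_4$-module forces every automorphism of $\frpsl_4(\FF_{\mathrm{alg}})$ to come from a symplectic similitude of $\bigwedge^2 V$. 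Your highest-weight count of the restricted irreducibles factoring through $\frpsl_4$ (dimensions $1,6,14,64$) is correct, and the passage from $W^\varphi\cong W$ to $\varphi=\mathrm{Int}(g)\vert_L$ with $g$ a similitude is sound. The payoff is that your argument is independent of \cite{HogewejII}; the paper's version is shorter but relies on that external classification.
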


Over an algebraic closure $\FF_{\text{alg}}$, the group $\Aut(\frpsl_4(\FF_{\text{alg}})$ is the adjoint Chevalley group of type $C_3$ (see \cite{HogewejII}), and hence it is isomorphic to projective general symplectic group $\textrm{PGSp}_6(\FF_{\text{alg}})\cong\Aut\bigl(\cA,\sigma\bigr)$. However, Theorem \ref{th:AAutDerC} considers arbitrary fields, and we shall give an explicit isomorphism of schemes in its proof. Note that this result over $\FF_{\text{alg}}$ shows that $\AAut(\frpsl_4(\FF))$ is connected, and Corollary \ref{co:pgsp6_Derpsl4} shows that it is smooth.

\begin{proof}[Proof of Theorem \ref{th:AAutDerC}]
As in the proof of Theorem \ref{th:DerC_psl4}, we may identify $\Der(\cC_s)\cong\frpsl_4(\FF)$ with the Lie algebra $\Skew(\cA,\sigma)^{(2)}$, where $\Skew(\cA,\sigma)\bydef \{x\in\cA: \sigma(x)=x\}$ (as the characteristic is two!) Consider the morphism of affine group schemes
\[
\begin{split}
\varphi:\AAut(\cA,\sigma)&\longrightarrow \AAut\Bigl(\Skew(\cA,\sigma)^{(2)}\Bigr),\\
f:\cA_R\rightarrow\cA_R&\mapsto f\vert_{\Skew(\cA_R,\sigma_R)^{(2)}},
\end{split}
\]
where $R$ is a unital, commutative and associative $\FF$-algebra, $\cA_R=\cA\otimes_\FF R$, and $\sigma_R=\sigma\otimes\id$ is the induced involution in $\cA_R$, so $\Skew(\cA_R,\sigma_R)^{(2)}=\Skew(\cA,\sigma)^{(2)}\otimes_\FF R$.

The group homomorphism on points in an algebraic closure $\FF_{\text{alg}}$
\[
\varphi_{\FF_{\text{alg}}}:\Aut(\cA_{\FF_{\text{alg}}})\rightarrow \Aut\bigl(\Skew(\cA_{\FF_{\text{alg}}},\sigma_{\FF_{\text{alg}}})^{(2)}\bigr)
\]
is injective, because $\Skew(\cA_{\FF_{\text{alg}}},\sigma_{\FF_{\text{alg}}})^{(2)}= \frsp_6(\FF_{\text{alg}})^{(2)}$ generates $\cA_{\FF_{\text{alg}}}=M_6(\FF_{\text{alg}})$ as an algebra. Also, the differential $\text{d}\varphi$ is an isomorphism: it is the isomorphism $\Phi$ in the proof of Corollary \ref{co:pgsp6_Derpsl4}. Therefore, $\varphi$ is a closed imbedding (\cite[(22.2)]{KMRT}). But both schemes are smooth, connected, and of the same dimension, so $\varphi$ is an isomorphism.
\end{proof}

Since $\AAut(\frpsl_4(\FF)$ is smooth, the set of isomorphism classes of twisted forms of $\frpsl_4(\FF)$ is in bijection with $H^1(\FF,\AAut(\frpsl_4(\FF))$ (see \cite[Chapters 17 and 18]{Waterhouse}), and also the set of isomorphism clases of central simple associative algebras of degree $6$ endowed with a symplectic involution is in bijection with $H^1(\FF,\AAut(M_6(\FF),\sigma)=H^1(\FF,\textrm{PGSp}_6(\FF))$. Our last result is then a direct consequence of Theorem \ref{th:AAutDerC}.

\begin{corollary}
Let $\FF$ be a field of characteristic $2$. The map that sends any central simple associative algebra of degree $6$ over $\FF$ endowed with a symplectic involution $(\cB,\tau)$ to the Lie algebra $\Skew(\cB,\tau)^{(2)}$ gives a bijection between the set of isomorphism classes of such pairs $(\cB,\tau)$ to the set of twisted forms over $\FF$ of the Lie algebra $\frpsl_4(\FF)$.
\end{corollary}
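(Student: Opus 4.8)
The plan is to obtain the statement by transporting the two cohomological parametrisations recalled just above the corollary along the scheme isomorphism of Theorem~\ref{th:AAutDerC}. Write $\cA=M_6(\FF)$. Theorem~\ref{th:DerC_psl4} (together with the proof of Theorem~\ref{th:AAutDerC}) identifies $\frpsl_4(\FF)$ with the Lie algebra $\Skew(\cA,\sigma)^{(2)}$, and Theorem~\ref{th:AAutDerC} provides an isomorphism of affine group schemes $\varphi\colon\AAut(\cA,\sigma)\xrightarrow{\sim}\AAut\bigl(\Skew(\cA,\sigma)^{(2)}\bigr)$, given on $R$-points by $f\mapsto f\vert_{\Skew(\cA_R,\sigma_R)^{(2)}}$. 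Since both group schemes are smooth (the first is $\textrm{PGSp}_6$; the second by Corollary~\ref{co:pgsp6_Derpsl4} and the remarks after Theorem~\ref{th:DerC_psl4}), $\varphi$ induces a bijection of pointed sets $\varphi_*\colon H^1(\FF,\AAut(\cA,\sigma))\to H^1(\FF,\AAut(\frpsl_4(\FF)))$, which under the two parametrisations becomes a bijection from the set of isomorphism classes of degree $6$ central simple algebras with symplectic involution onto the set of isomorphism classes of twisted forms of $\frpsl_4(\FF)$.

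It then remains to check that $\varphi_*$ is exactly the map of the statement. First, the assignment $(\cB,\tau)\mapsto\Skew(\cB,\tau)^{(2)}$ is well defined on isomorphism classes and lands among twisted forms of $\frpsl_4(\FF)$: an isomorphism of pairs restricts to a Lie algebra isomorphism of the corresponding second derived algebras of skew elements, and over a separable closure $\FF_{\mathrm{sep}}$ any such pair becomes $(\cA,\sigma)$, whence $\Skew(\cB,\tau)^{(2)}\otimes_\FF\FF_{\mathrm{sep}}\cong\Skew(\cA,\sigma)^{(2)}\otimes_\FF\FF_{\mathrm{sep}}\cong\frpsl_4(\FF)\otimes_\FF\FF_{\mathrm{sep}}$. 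Now, given $(\cB,\tau)$, choose an isomorphism $\psi\colon(\cB,\tau)\otimes_\FF\FF_{\mathrm{sep}}\to(\cA,\sigma)\otimes_\FF\FF_{\mathrm{sep}}$ and let $\gamma=(\gamma_s)_s$, with $\gamma_s=\psi\circ{}^{s}\psi^{-1}$, be the associated descent cocycle in $\AAut(\cA,\sigma)$. Because $\varphi$ is defined over $\FF$ by restriction to $\Skew(-,-)^{(2)}$, the image cocycle $\varphi\circ\gamma$ is the one attached to $\psi\vert_{\Skew(\cB,\tau)^{(2)}\otimes\FF_{\mathrm{sep}}}$, that is, exactly the cocycle twisting $\Skew(\cA,\sigma)^{(2)}\cong\frpsl_4(\FF)$ into $\Skew(\cB,\tau)^{(2)}$. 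Hence $\varphi_*[(\cB,\tau)]=[\Skew(\cB,\tau)^{(2)}]$, and since $\varphi_*$ is a bijection of pointed sets and two twisted forms are isomorphic precisely when their cocycles are cohomologous, the map of the statement is a bijection.

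The step I expect to be the main obstacle is this last cocycle comparison: one must make the construction $(\cB,\tau)\mapsto\Skew(\cB,\tau)^{(2)}$ genuinely functorial and compatible with base change, and then verify that restricting automorphisms to $\Skew(-,-)^{(2)}$ commutes with forming the descent cocycle, so that the concrete map of the statement coincides with cohomological transport along $\varphi$. Everything else is a routine invocation of the descent formalism, using the smoothness of the two group schemes already established in the paper.
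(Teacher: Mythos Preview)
Your proposal is correct and follows precisely the approach the paper intends: the paper simply states that the corollary ``is then a direct consequence of Theorem~\ref{th:AAutDerC}'' after recording that both sides are parametrised by $H^1$ of the respective (smooth) automorphism group schemes, and you have fleshed out exactly this argument. Your extra care in verifying that the cohomological bijection $\varphi_*$ is realised by the explicit map $(\cB,\tau)\mapsto\Skew(\cB,\tau)^{(2)}$ goes beyond what the paper writes down but is the natural justification for its claim.
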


Recall that $\frpsl_4(\FF)$ is (isomorphic to) the Chevalley algebra of type $G_2$, so this corollary gives the twisted forms of the classical simple Lie algebras of type $G_2$ in characteristic $2$.

\bigskip 


\end{document}